\newtheorem{thm}{Theorem}[section]
\newtheorem{lem}[thm]{Lemma}
\newtheorem{prop}[thm]{Proposition}
\newtheorem{cor}[thm]{Corollary}
\theoremstyle{definition}
\newtheorem{rem}[thm]{Remark}
\newtheorem{ex}[thm]{Example}
\newcommand{\R}{\mathbb{R}}
\newcommand{\C}{\mathbb{C}}
\newcommand{\Z}{\mathbb{Z}}
\newcommand{\F}{\mathbb{F}}
\title{Intersective sets over abelian groups}
\author{Zixiang Xu}
\address{Extremal Combinatorics and Probability Group, Institute for Basic Science, Daejeon, South Korea}
\email{zixiangxu@ibs.re.kr}
\author{Chi Hoi Yip}
\address{School of Mathematics\\ Georgia Institute of Technology\\Atlanta, GA 30332\\ United States}
\email{cyip30@gatech.edu}
\subjclass[2020]{05D05, 11B30, 11C08}
\keywords{Intersective set, finite abelian group, cyclotomic polynomial, independence number}
\begin{document}

\begin{abstract}
Given a finite abelian group $G$ and a subset $J\subset G$ with $0\in J$, let $D_{G}(J,N)$ be the maximum size of $A\subset G^{N}$ such that the difference set $A-A$ and $J^{N}$ have no non-trivial intersection. Recently, this extremal problem has been widely studied for different groups $G$ and subsets $J$. In this paper, we generalize and improve the relevant results by Alon and by Heged\H{u}s by building a bridge between this problem and cyclotomic polynomials with the help of
algebraic graph theory. In particular, we construct infinitely many non-trivial families of $G$ and $J$
for which the current known upper bounds on $D_{G}(J, N)$ can be improved exponentially. 
\end{abstract}

\maketitle

\section{Introduction}

Throughout the paper, $p$ is a prime and $q$ is a prime power. Let $\F_q$ be the finite field with $q$ elements. Let $G$ be a finite abelian group. We use the additive notation for the operation in $G$ and $0$ to denote the identity element in $G$. We write $G=\F_q$ if $G$ is the additive group of $\F_q$.

A set $H \subset \mathbb{Z}^+$ is {\it intersective} if whenever $A$ is a subset of positive upper density of $\mathbb{Z}$, we have $(A-A) \cap H \neq \emptyset$. In the late 1970s, S\'ark\"ozy \cite{S78} and  Furstenberg \cite{F77, F81} independently proved that the set of perfect squares is intersective. Perhaps a quantitative aspect of
the problem is more interesting: denote by $D(H, N)$ the maximum size of a subset $A \subset \{1, \cdots, N\}$ such that $(A-A) \cap H =\emptyset$; one observes that $H$ is intersective if and only if $D(H, N)=o(N)$. There are many variants of this intersective set problem, and we refer to a survey of L\^e \cite{Le14} for the function field analogue, including the analogue of this notion on vector spaces over finite fields.

Note that this intersective set problem is about the additive structure; thus, the multiplicative structure of the finite field might not be relevant. Motivated by this observation, one can consider the following notion of a quantitative intersective set problem on Cartesian products of a finite abelian group. Let $J \subset G$ such that $0 \in J$. For each positive integer $N$, we define
$$
D_{G}(J,N)=\max\{|A|: A \subset G^N, (A-A) \cap J^N=\{\mathbf{0}\}\},
$$
where $G^N$ and $J^N$ are $N$-fold Cartesian products of $G$ and $J$, respectively. In other words, $D_{G}(J,N)$ is the maximum size of $A\subset G^{N}$ such that the difference set $A-A$ intersects $J^{N}$ trivially.  We establish new lower and upper bounds on $D_{G}(J,N)$.

Recently, this extremal problem in additive combinatorics has been widely studied for different groups $G$ and subsets $J$. When $N=1$ and $G$ is a general abelian group, Matolcsi and Ruzsa \cite{MR14} described a connection between intersective properties of sets in abelian groups and positive exponential sums. When $N=1$, $G$ is the additive group of a finite field $\F_q$ and $J\setminus \{0\}$ is a multiplicative subgroup of $\F_q$ such that $J=-J$, estimating $D_G(J,1)$ is equivalent to estimating the independence number of a (generalized) Paley graph, which is notoriously difficult in general; we refer to the recent improvement on the upper bound \cite{HP21,N22,Yip22,Yip25}. When $J=-J$, an upper bound of $D_G(J,N)^{1/N}$ is given by the Shannon capacity of the Cayley graph $\operatorname{Cay}(G, J\setminus \{0\})$, which is again notoriously difficult to estimate in general. For example, when $G=\Z_7$, and $J=\{-1,0,1\}$, estimating $D_G(J,N)^{1/N}$ is the same as estimating the Shannon capacity of the $7$-cycle, which is still widely open; see \cite[Problem 38]{Green}.

In this paper, we study the asymptotic behavior of $D_G(J,N)$ when $G$ and $J$ are fixed, and $N \to \infty$. We are most interested in the case that $J \cap (-J)=\{0\}$, the case that $J$ is a small subset of $G$, and the case that $J$ has some extra additive structure while $J$ does not have multiplicative structure.

Under this setting, the first relevant result is due to Alon \cite[Theorem 5]{Le14}. Given two sequences of real numbers $\{X_N\}$, $\{Y_N\}$, the Vinogradov notation $X_N \ll Y_N$ means that there is an absolute constant $C>0$ so that $X_N \leq CY_N$ for all $N$.

\begin{thm}[Alon]\label{thmA}
If $p$ is an odd prime, then as $N \to \infty$,
$$
\frac{(p-1)^N}{p\sqrt{N}} \ll D_{\F_p}(\{0,1\},N)\leq (p-1)^N.
$$
\end{thm}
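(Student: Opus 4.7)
The plan is to prove the two bounds by separate arguments. For the upper bound $D_{\F_p}(\{0,1\}, N) \leq (p-1)^N$, I would use a polynomial/diagonal argument: to each $a = (a_1, \ldots, a_N) \in A$ I attach the polynomial
$$
Q_a(x_1, \ldots, x_N) := \prod_{i=1}^N \prod_{\substack{b \in \F_p \\ b \notin \{a_i, a_i+1\}}} (x_i - b) \in \F_p[x_1, \ldots, x_N],
$$
which has degree at most $p-2$ in each variable and so lies in the space $V$ of polynomials whose coordinate-wise degrees are all $\leq p-2$; this space has dimension $(p-1)^N$. By construction, $Q_a(a') \neq 0$ exactly when $a'_i \in \{a_i, a_i+1\}$ for every $i$, equivalently $a' - a \in \{0,1\}^N$, and the hypothesis $(A-A) \cap \{0,1\}^N = \{0\}$ forces this to happen only at $a' = a$. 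Hence the matrix $(Q_a(a'))_{a, a' \in A}$ is diagonal with nonzero diagonal, which gives linear independence of the $Q_a$ in $V$ and so $|A| \leq \dim V = (p-1)^N$.

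For the lower bound, I would pick an integer $c$ and take
$$
A_c := \Bigl\{ a \in \{1, 2, \ldots, p-1\}^N : a_1 + \cdots + a_N = c \Bigr\},
$$
reading the $a_i$ simultaneously as nonzero residues mod $p$ and as integers in $\{1, \ldots, p-1\}$. The intersective condition is checked by lifting: if $a, a' \in A_c$ satisfy $a - a' \in \{0,1\}^N$ in $\F_p^N$, then each integer difference $a_i - a'_i$ lies in $\{-(p-2), \ldots, p-2\}$, and the only integers in this range that are congruent to $0$ or $1$ mod $p$ are $0$ and $1$ themselves; hence $a - a' \in \{0,1\}^N$ as an integer vector whose coordinate sum equals $c - c = 0$, forcing $a = a'$. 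To bound $\max_c |A_c|$ from below I treat $a_1, \ldots, a_N$ as i.i.d.\ uniform samples on $\{1, \ldots, p-1\}$, note that $\operatorname{Var}(a_1) = p(p-2)/12 = \Theta(p^2)$, and apply the local central limit theorem to $a_1 + \cdots + a_N$, obtaining $\max_c |A_c|/(p-1)^N \gg 1/(p\sqrt{N})$, i.e.\ $\max_c |A_c| \gg (p-1)^N/(p\sqrt{N})$.

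The main creative ingredient is the polynomial $Q_a$: it must detect precisely membership in the shifted box $a + \{0,1\}^N$ while having coordinate-wise degree $p-2$ rather than the obvious $p-1$, which is exactly what squeezes the upper bound from the trivial $p^N$ down to $(p-1)^N$. Once this is set up, both halves of the theorem reduce to standard linear algebra and a textbook local-limit estimate; matching $(p-1)^N$ up to a factor of $p\sqrt{N}$ is a pleasant coincidence between the dimension of the polynomial space and the support of the level-set construction.
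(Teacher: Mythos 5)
Your proposal is correct and follows essentially the same route as the paper: the upper bound is the Alon--Heged\H{u}s/Blokhuis linear-algebra argument (one polynomial of coordinate-wise degree $p-2$ per element of $A$, diagonal evaluation matrix, dimension $(p-1)^N$), and the lower bound is exactly the fixed-coordinate-sum construction the paper reproduces in Section~\ref{thm4}, with the local limit theorem (Corollary~\ref{central}) supplying the $1/(p\sqrt{N})$ factor. The only cosmetic differences are that the paper takes entries in $\{0,\dots,p-2\}$ with sum $\lfloor (p-2)N/2\rfloor$ rather than optimizing over the level $c$, and that the paper's own machinery (Theorem~\ref{mainthm2} with $h(t)=1-t$) recovers the same upper bound spectrally.
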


More recently, Heged\H{u}s \cite{HG20} showed a similar bound for any general set $J$ as follows.

\begin{thm}[Heged\H{u}s] \label{thmH}
If $q \geq 3$ and $J \subset \F_q$ such that $0 \in J$, then
$$
D_{\F_q}(J,N)\leq (q-|J|+1)^N.
$$
\end{thm}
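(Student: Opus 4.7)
The plan is to prove the upper bound by the polynomial method, associating to each element of $A$ a polynomial and then bounding the dimension of the ambient function space. Let $A\subset \F_q^N$ satisfy $(A-A)\cap J^N=\{\mathbf{0}\}$, and set $S=\F_q\setminus J$, so $|S|=q-|J|$ and, crucially, $0\notin S$ since $0\in J$.

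The key construction is the univariate polynomial
$$f(x)=\prod_{c\in S}(x-c)\in\F_q[x],$$
which has degree exactly $q-|J|$, vanishes on $S$, and satisfies $f(0)=\prod_{c\in S}(-c)\neq 0$. For each $a=(a_1,\dots,a_N)\in A$, I would define
$$P_a(x_1,\dots,x_N)=\prod_{i=1}^N f(x_i-a_i).$$
Then $P_a(a)=f(0)^N\neq 0$; and for distinct $a,b\in A$, the hypothesis $b-a\notin J^N$ forces some coordinate $b_i-a_i$ to lie in $S$, so $f(b_i-a_i)=0$ and hence $P_a(b)=0$. A standard triangular argument (evaluating any dependence $\sum_a c_a P_a \equiv 0$ successively at points of $A$) then shows that $\{P_a\}_{a\in A}$ is linearly independent in the space of $\F_q$-valued functions on $\F_q^N$.

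To conclude, I would bound the dimension of the ambient space: every $P_a$ has degree at most $q-|J|$ in each individual variable $x_i$, so $P_a$ lies in the span of the monomials $x_1^{e_1}\cdots x_N^{e_N}$ with $0\le e_i\le q-|J|$. Because $q-|J|+1\le q$, distinct such monomials give distinct functions on $\F_q^N$, and the span has dimension exactly $(q-|J|+1)^N$. Combining this with the linear independence of $\{P_a\}$ yields $|A|\le (q-|J|+1)^N$, as desired.

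The heart of the argument is really the choice of $f$; once one insists on a polynomial vanishing on $\F_q\setminus J$ (forced by the structure of the non-intersection condition) and nonzero at $0$, the degree bound $q-|J|$ is essentially optimal, and the product structure of $P_a$ matches the product structure of $J^N$ perfectly. I do not foresee a significant obstacle: the only mild subtlety is verifying that the polynomial and function interpretations of the dimension agree, which is automatic here because the per-variable degree cap $q-|J|$ is strictly below $q-1$ except in the degenerate case $|J|=1$, and even then the argument collapses to the trivial bound $|A|\le q^N$ for $A=G^N$.
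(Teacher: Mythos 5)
Your proof is correct, and it is exactly the linear-algebra/polynomial method that the paper attributes to Alon, Heged\H{u}s, and Blokhuis: a family of polynomials $P_a$ indexed by $A$ (products of a univariate polynomial vanishing on $\F_q\setminus J$ and nonzero at $0$), shown to be linearly independent by evaluation at the points of $A$, with $|A|$ then bounded by the dimension $(q-|J|+1)^N$ of the spanned space of functions. This matches the paper's (cited) argument, so there is nothing to add.
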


The proof of Theorem~\ref{thmA} and Theorem~\ref{thmH} relies on linear algebra methods.  We remark that the identical proof was independently found by Blokhuis~\cite{B93} to prove the same result in the setting of Sperner capacity; see also \cite{CFG93}. More precisely, Alon and Heged\H{u}s showed that if $A \subset \F_q^N$ such that $(A-A)\cap J^N=\{\mathbf{0}\}$, then there is a family of polynomials indexed by $A$ which are linearly independent so that $|A|$ can be upper bounded by the dimension of vector space spanned by this family of polynomials. Note that the underlying field structure is crucial when we apply linear algebra methods or polynomial methods in general. Since $D_{\F_q}(J,N)$ has nothing to do with the multiplicative structure of $\F_q$, one can expect that similar results hold for all abelian groups. Indeed, Alon\footnote{According to our private communication with Th\'{a}i Ho\`ang L\^{e}, who included Alon's proof in his survey \cite{Le14}.} pointed out that his proof can be slightly modified to extend to a cyclic group $G$ (whose order is not necessarily prime) by embedding a cyclic group $G$ into $\C$ in a natural way (by sending a generator $g$ of $G$ to a primitive $|G|$-th root of unity) so that the same polynomial method applies since $\C$ is a field. Similarly, Theorem~\ref{thmH} can be extended to all cyclic groups $G$. Through the above observation, one can easily show the following theorem with the same linear algebra method.

\begin{thm}\label{thmG}
Let $G$ be a cyclic group with order at least $3$. If $J \subset G$ such that $0 \in J$, then
$$
D_{G}(J,N)\leq (|G|-|J|+1)^N.
$$
\end{thm}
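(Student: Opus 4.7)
The plan is to mimic the Alon–Hegedűs polynomial argument, after replacing the underlying field $\F_q$ with $\C$ via the standard embedding of a cyclic group into the roots of unity. Write $n=|G|$, fix a generator $g$ of $G$, and let $\zeta=e^{2\pi i/n}$. Define $\phi\colon G\hookrightarrow \C^{\times}$ by $\phi(kg)=\zeta^{k}$; this is an injective homomorphism whose image is the group of $n$-th roots of unity. I will extend $\phi$ coordinatewise to a map $G^N\to \C^N$, still denoted $\phi$.

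The heart of the argument is to associate to each $\mathbf{a}=(a_1,\dots,a_N)\in A$ the polynomial
\[
P_{\mathbf{a}}(x_1,\dots,x_N)\;=\;\prod_{i=1}^{N}\ \prod_{t\in G\setminus (a_i-J)}\bigl(x_i-\phi(t)\bigr)\ \in\ \C[x_1,\dots,x_N].
\]
Since $|G\setminus(a_i-J)|=n-|J|$, each $P_{\mathbf{a}}$ has degree at most $n-|J|$ in each variable $x_i$ and therefore lies in the vector space
\[
V\;=\;\mathrm{span}_{\C}\bigl\{x_1^{e_1}\cdots x_N^{e_N}:\,0\leq e_i\leq n-|J|\bigr\},
\]
which has dimension exactly $(n-|J|+1)^N$.

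The key step is to verify the ``diagonal'' behavior of $P_{\mathbf{a}}$ on the image of $A$. Because $0\in J$, we have $a_i\in a_i-J$, so no factor $x_i-\phi(a_i)$ occurs in $P_{\mathbf{a}}$, and the injectivity of $\phi$ (equivalently, the fact that distinct powers of $\zeta$ are distinct) forces $P_{\mathbf{a}}(\phi(\mathbf{a}))\neq 0$. On the other hand, for any $\mathbf{b}\in A\setminus\{\mathbf{a}\}$ the hypothesis $(A-A)\cap J^N=\{\mathbf{0}\}$ gives some coordinate $i$ with $a_i-b_i\notin J$, i.e.\ $b_i\in G\setminus (a_i-J)$, so the factor $x_i-\phi(b_i)$ appears in $P_{\mathbf{a}}$ and hence $P_{\mathbf{a}}(\phi(\mathbf{b}))=0$. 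A routine triangular argument (if $\sum_{\mathbf{a}\in A}c_{\mathbf{a}}P_{\mathbf{a}}=0$, evaluate at $\phi(\mathbf{b})$ for each $\mathbf{b}\in A$ to conclude $c_{\mathbf{b}}=0$) shows that the family $\{P_{\mathbf{a}}\}_{\mathbf{a}\in A}$ is linearly independent in $V$, yielding $|A|\leq \dim V=(|G|-|J|+1)^N$.

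I do not anticipate any serious obstacle: the only substantive change from the field case is that $\C$ replaces $\F_q$, and the embedding $\phi$ preserves exactly the features of the underlying additive structure that the polynomial method needs, namely that distinct elements of $G$ map to distinct scalars and that $0\in J$ ensures $P_{\mathbf{a}}$ does not vanish at its ``own'' point. The mildest point to be careful about is to confirm that the quantity $|G\setminus (a_i-J)|$ really equals $n-|J|$ for every $i$ (which follows because $a_i-J$ is a translate of $-J$ inside $G$ and thus has the same cardinality as $J$), and that the hypothesis $|G|\geq 3$ is used only to ensure the bound is nontrivial, not in the proof itself.
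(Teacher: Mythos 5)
Your proof is correct and is exactly the argument the paper has in mind: it only sketches Theorem~\ref{thmG} by noting that Alon's and Heged\H{u}s's polynomial method carries over after embedding the cyclic group into $\C$ via a primitive $|G|$-th root of unity, and your construction of the polynomials $P_{\mathbf{a}}$, the vanishing/nonvanishing verification using $0\in J$ and $(A-A)\cap J^N=\{\mathbf{0}\}$, and the dimension count $(|G|-|J|+1)^N$ is precisely that extension spelled out in full. The only cosmetic remark is that your linear-independence step is a direct diagonal evaluation (no ordering or triangularity is needed), which is fine as written.
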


It seems the same method does not straightforwardly extend to non-cyclic abelian groups. In Theorem~\ref{mainthm4} we show that Theorem~\ref{thmA} further extends to all abelian groups.

Recently, Huang, Klurman, and Pohoata \cite[Theorem 1.3]{HKP20} provided a multiplicative constant improvement on the upper bound stated in Theorem~\ref{thmA}:

\begin{thm}[Huang/Klurman/Pohoata]\label{thmHKP}
If $p$ is an odd prime,
$$
D_{\F_p}(\{0,1\},N)\leq \bigg(1-\frac{1}{2}\bigg(1-\frac{1}{p-1}\bigg)^{p}\bigg)(p-1)^N.
$$
In particular, when $p\to\infty$, the constant factor tends to $1-\frac{1}{2e}$.
\end{thm}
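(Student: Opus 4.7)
My plan is to sharpen Alon's polynomial argument (Theorem~\ref{thmA}) by exhibiting additional linear constraints that the evaluator polynomials $\{P_a\}_{a \in A}$ must satisfy, beyond being merely linearly independent, thereby forcing them into a proper subspace of the ambient $(p-1)^N$-dimensional polynomial space.

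\emph{Setup (Alon's bound).} For each $a \in A$, define
\[
P_a(x_1, \ldots, x_N) = \prod_{i=1}^N Q(x_i - a_i), \qquad Q(y) = \prod_{j \in \F_p \setminus \{0,1\}}(y - j) \in \F_p[y].
\]
Then $Q$ has degree $p-2$, $Q(0) = 1$, $Q(1) = -1$, and $Q(j) = 0$ for $j \in \F_p \setminus \{0, 1\}$. Because $(A - A) \cap \{0, 1\}^N = \{\mathbf{0}\}$, we have $P_a(b) = \delta_{a, b}$ for all $a, b \in A$. So the polynomials $P_a$ are linearly independent and lie in the $(p-1)^N$-dimensional space $V$ of polynomials with partial degree $\leq p - 2$, giving $|A| \leq (p-1)^N$.

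\emph{Improvement strategy.} Set $D = \tfrac{1}{2}(p-2)^p (p-1)^{N-p}$, which equals the target improvement $\tfrac{1}{2}(1 - \tfrac{1}{p-1})^p (p-1)^N$. I seek a subspace $W \subset V$ containing every $P_a$ for $a \in A$ with $\dim W \leq (p-1)^N - D$. Equivalently, I need $D$ linearly independent functionals $\Lambda : V \to \F_p$ (or $\C$) satisfying $\Lambda(P_a) = 0$ for all $a \in A$. A natural starting point is the one-variable identity $\sum_{b \in \F_p} Q(y - b) = 0$, valid since $\sum_{b \in \F_p} b^k = 0$ for $0 \leq k \leq p-2$; tensoring yields the single relation $\sum_{a \in \F_p^N} P_a = 0$. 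To produce the required $D$ independent relations, I would combine evaluation functionals $\mathrm{ev}_w$ at points $w \notin A + \{0, 1\}^N$ with weighted sums along affine subsets of $\F_p^N$, leveraging the intersective condition on $A$ to enforce cancellation that would otherwise fail for a generic $a \in \F_p^N$.

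\emph{Main obstacle.} The hardest step is the combinatorial and linear-algebraic bookkeeping required to (i) write the annihilating functionals $\Lambda$ in closed form, (ii) verify that each $\Lambda$ vanishes on every $P_a$ with $a \in A$ --- crucially, using the intersective hypothesis, without which the $\Lambda$'s would kill at most the trivial relation --- and (iii) check that they are linearly independent and total at least $D$ in number. The arithmetic shape $(p-2)^p (p-1)^{N-p}/2$ strongly suggests that the relations are parameterized by a choice of $p$ distinguished coordinates together with an assignment of values in $\F_p \setminus \{0, 1\}$ to those coordinates, with the factor $\tfrac{1}{2}$ reflecting the sign reflection $v \mapsto \mathbf{1} - v$ on $\{0, 1\}^N$ (which exchanges $Q(0) = 1$ with $Q(1) = -1$ and pairs relations up to sign). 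I would test the construction first for $p = 3$, $N = 2$, where $D = 1/4$ and the bound is supposed to improve from $|A| \leq 4$ to $|A| \leq 15/4$, in search of a single explicit relation that the formula predicts, and then generalize via the tensor structure.
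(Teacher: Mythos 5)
Your setup reproduces Alon's polynomial argument correctly, but the actual content of the theorem --- the gain of the factor $1-\frac{1}{2}(1-\frac{1}{p-1})^{p}$ --- is exactly the part you leave unconstructed. To run your plan you would need $D=\frac{1}{2}(1-\frac{1}{p-1})^{p}(p-1)^{N}$ functionals on $V$ that (a) annihilate every $P_a$ with $a\in A$, using the hypothesis $(A-A)\cap\{0,1\}^N=\{\mathbf 0\}$, and (b) are linearly independent when restricted to $V$; neither is carried out, and the ingredients you name do not supply them. The identity $\sum_{b\in\F_p}Q(y-b)=0$ is a relation among all $p^N$ shifts $P_a$, $a\in\F_p^N$; it says nothing about the span of the subfamily $\{P_a:a\in A\}$ and yields no annihilating functional for it. Evaluations $\mathrm{ev}_w$ at $w\notin A+\{0,1\}^N$ do annihilate the $P_a$, but bounding from below the rank of this family of functionals on $V$ by $D$ is precisely as hard as the theorem itself (and depends on the unknown structure of $A+\{0,1\}^N$). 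Moreover the heuristic that the factor $\frac12$ comes from a sign reflection cannot work over $\F_p$, where there are no signs; and your own test case $p=3$, $N=2$ gives the non-integer ``$D=1/4$'', showing the constant is not of the form ``dimension minus a count of universal relations.'' So the proposal is a plan with the key idea missing, not a proof.

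For comparison: this theorem is not proved in the present paper at all --- it is quoted from Huang, Klurman, and Pohoata \cite{HKP20} --- and their argument (also the engine behind Theorem~\ref{mainthm2}, Theorem~\ref{mainthm4}, and Corollary~\ref{corvs} here) is spectral rather than polynomial. One passes to the Cayley graph of $\F_p^N$ with connection set $\{0,1\}^N\cup\{0,-1\}^N\setminus\{\mathbf 0\}$, weights it so that the eigenvalue attached to the character indexed by $\vec v\in\F_p^N$ is $-2+2\,\Re\prod_{j=1}^N\bigl(1-e_p(v_j)\bigr)$, and applies the Cvetkovi\'{c} inertia bound (Theorem~\ref{Cv}); the constant then comes from counting the characters whose eigenvalue is non-negative, a computation over $\C$ where positivity is available. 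If you want the improved constant, that is the route to follow (or the refinement in Section~\ref{thm4}, which pushes the same count to $\frac12+o(1)$ via the Berry--Esseen and local limit theorems); a purely $\F_p$-linear-algebraic proof of any constant-factor improvement is not known and would itself be a noteworthy result.
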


The proof of Theorem~\ref{thmHKP} is based on the spectral method. Huang, Klurman, and Pohoata \cite[Theorem 1.3]{HKP20} first converted the problem to finding the independence number of a (weighted) Cayley graph, and then computed the eigenvalues of the pseudo-adjacency matrix and applied the {C}vetkovi\'{c} bound to obtain an upper bound on the independence number. They also remarked at the end of \cite[Section 4]{HKP20} that they believed a more careful analysis would improve the constant $1-1/2e$ to $1/2$. We confirm their remark in a stronger form; see  Corollary~\ref{corvs}. Some of our main results are also inspired by their work.

Our first main result provides a connection between $D_G(J,N)$ and cyclotomic polynomials whose exponents are elements in $J$. We remark that it is known that there is a close connection between cyclotomy and difference sets (in the spirit of design theory); see for example the book by Storer \cite{S67}. However, the connection between cyclotomic polynomials and the difference sets we are working on (in the spirit of additive combinatorics) appears new. For any positive integer $n$, and any $x \in \Z_n$, we follow the standard notation that $e_n(x)=\exp(2 \pi i x/n)$, where we embed $\Z_n$ into $\Z$. Moreover, we write $[G:H]$ for the index of a subgroup $H$ in a group $G$.

\begin{thm}\label{mainthm2}
Let $G$ be a finite abelian group. Let $H$ be a cyclic subgroup of $G$ with order $n$, and let $J \subset H$ such that $J \cap (-J)=\{0\}$. Suppose that there is a polynomial $h(t)\in \Z[t]$ such that the coefficient of $t^k$ in $h(t)$ is nonzero only if $k \in J$ (by identifying $J$ as a subset of $\{0,1,\ldots,n-1\}$ in a natural way) and $h(0)=1$. Then we have
$$
D_{G}(J,N) \leq [G:H]^N \cdot \#\bigg\{\vec{v}=(v_1,v_2,\ldots, v_N) \in \Z_n^N: \Re \bigg(\prod_{j=1}^N h\big(e_n(v_j)\big) \bigg) \geq 1 \bigg\} ,
$$
where $\Re(z)$ denotes the real part of a complex number $z$. In particular, if $h(t) \mid (t^n-1)$, then
$$
D_{G}(J,N) \leq \big(|G|-\deg(h)[G:H]\big)^N.
$$
\end{thm}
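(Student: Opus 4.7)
The plan reduces to the cyclic case by a coset argument and then applies a spectral inequality for a pseudo-adjacency matrix whose eigenvalues are exactly the quantities $\Re\bigl(\prod_j h(e_n(v_j))\bigr)$ appearing on the right-hand side.

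\emph{Coset reduction.} Since $J \subseteq H$, any two points of a valid $A \subseteq G^N$ whose difference lies in $J^N$ must lie in the same coset of $H^N$ inside $G^N$. Partitioning $G^N$ into its $[G:H]^N$ cosets of $H^N$ and translating each slice of $A$ back into $H^N$ preserves the intersective condition, so $D_G(J,N) \le [G:H]^N \cdot D_H(J,N)$. It therefore suffices to prove the main inequality when $G = H = \Z_n$.

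\emph{Matrix setup and spectrum.} Writing $h(t) = \sum_k c_k t^k$, define $h_*:\Z_n\to\Z$ by $h_*(k)=c_k$; this function is supported on $J$ and satisfies $h_*(0)=1$. Form the convolution-type matrix $M$ on $\Z_n^N$ with $M_{\vec x,\vec y}=\prod_{j=1}^N h_*(x_j-y_j)$ and its symmetrization $\tilde M = \tfrac{1}{2}(M+M^\top)$. The characters $\chi_{\vec v}(\vec x)=\prod_j e_n(v_j x_j)$ form a simultaneous eigenbasis of $M$ and $M^\top$; a direct Fourier computation shows that $M\chi_{\vec v} = \bigl(\prod_j h(e_n(-v_j))\bigr)\chi_{\vec v}$ and $M^\top\chi_{\vec v} = \bigl(\prod_j h(e_n(v_j))\bigr)\chi_{\vec v}$. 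Averaging (and using that $h$ has real coefficients) gives $\tilde M\chi_{\vec v}=\mu_{\vec v}\chi_{\vec v}$ with $\mu_{\vec v}=\Re\bigl(\prod_j h(e_n(v_j))\bigr)$. For $A\subseteq\Z_n^N$ with $(A-A)\cap J^N=\{\mathbf{0}\}$, the diagonal entries of $\tilde M|_A$ are $h_*(0)^N=1$, and all off-diagonal entries vanish because both $\vec x-\vec y$ and $\vec y-\vec x$ lie outside $J^N$ for distinct $\vec x,\vec y\in A$; hence $\tilde M|_A = I_A$ on the nose.

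\emph{From the identity restriction to the dimension bound.} Let $V_+$ be the span of those characters $\chi_{\vec v}$ with $\mu_{\vec v}\ge 1$, so that $\dim V_+ = \#\{\vec v:\mu_{\vec v}\ge 1\}$. I claim the orthogonal projection onto $V_+$ is injective on $\mathrm{span}\{e_{\vec x}\}_{\vec x\in A}$. Otherwise some nonzero $w=\sum_{\vec x\in A}c_{\vec x}e_{\vec x}$ lies in $V_+^\perp$; from $\tilde M|_A = I_A$ one has $w^{*}\tilde M w = \sum_{\vec x}|c_{\vec x}|^2 = \|w\|^2$, while expanding $w$ in the eigenbasis of $\tilde M$ restricted to $V_+^\perp$ gives $w^{*}\tilde M w = \sum_{\mu_{\vec v}<1}\mu_{\vec v}|\alpha_{\vec v}|^2 < \|w\|^2$ because every relevant eigenvalue is strictly less than $1$. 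This contradiction forces $|A|\le\dim V_+$, and combining with the coset factor yields the first inequality of the theorem.

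\emph{The cyclotomic consequence.} If $h(t)\mid t^n-1$, then $h$ has $\deg h$ distinct roots among the $n$-th roots of unity, corresponding to a set $Z\subseteq\Z_n$ of size $\deg h$. Any $\vec v$ with some coordinate $v_j\in Z$ satisfies $\prod_j h(e_n(v_j))=0$, so $\mu_{\vec v}=0<1$; hence $\#\{\vec v:\mu_{\vec v}\ge 1\}\le(n-\deg h)^N$, and multiplying by $[G:H]^N=(|G|/n)^N$ gives $\bigl(|G|-[G:H]\deg h\bigr)^N$. The main obstacle is the third paragraph: choosing the right symmetrization so that $\tilde M|_A$ is exactly the identity, and then bounding $|A|$ by a dimension count of the positive eigenspace $V_+$ rather than by the more familiar largest-eigenvalue Cvetkovi\'c-type argument. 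The identity restriction is what makes the sharper positive-eigenspace bound available here.
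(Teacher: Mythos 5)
Your proposal is correct, and it follows the same underlying spectral strategy as the paper (a convolution-type matrix built from the coefficients of $h$, diagonalized by characters, with the count of eigenvalues $\Re\bigl(\prod_j h(e_n(v_j))\bigr)\ge 1$ bounding $|A|$), but two steps are executed differently. First, you obtain the factor $[G:H]^N$ by slicing $A$ along cosets of $H^N$ and reducing to the cyclic case $G=H=\Z_n$, whereas the paper works on $G^N$ throughout, computes the eigenvalues over all characters of $G^N$, and then converts the character count into $[G:H]^N$ times a count over $\Z_n^N$ via the character-extension lemma (each value $\chi(a)=e_n(x)$ is attained by exactly $[G:H]$ characters of $G$). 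Second, the paper symmetrizes at the level of the weight function: it extends the coefficients of $h$ to $J\cup(-J)$ (here is where $J\cap(-J)=\{0\}$ is invoked, to make $f$ well defined), takes a zero-diagonal pseudo-adjacency matrix of the Cayley graph, gets eigenvalues $-2+2\Re\bigl(\prod_j h(\chi_j(1))\bigr)$, and quotes the Cvetkovi\'c inertia bound, so the threshold $\Re\ge 1$ appears as nonnegativity of the shifted eigenvalue. You instead symmetrize the matrix itself, keep the diagonal equal to $h(0)^N=1$, observe $\tilde M|_A=I_A$, and run a projection argument onto the span of eigenvectors with eigenvalue $\ge 1$ — in effect Cauchy interlacing against an identity principal submatrix — which yields the same threshold without invoking the inertia bound as a black box (and, as a side effect, does not even need $J\cap(-J)=\{0\}$ for the first inequality). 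The final cyclotomic step is identical to the paper's. Both routes are sound; yours trades the character-extension lemma and the Cayley-graph formalism for a more hands-on coset reduction and eigenvalue-comparison argument.
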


When $J=\{0,a\}$, by setting $h(t)=1-t$ in Theorem~\ref{mainthm2}, we see that Theorem~\ref{thmA} naturally extends to any abelian group $G$. A more careful analysis leads to the following theorem.

\begin{thm}\label{mainthm4}
Let $G$ be a finite abelian group. Let $a \in G$ such that $a \neq -a$. Then as $N \to \infty$,
$$
\frac{(|G|-[G:H])^N}{|H|\sqrt{N}} \ll D_G(\{0,a\},N)\leq \big(c+o(1)\big) \big(|G|-[G:H]\big)^N,
$$
where $H$ is the subgroup of $G$ generated by $a$, $c=\frac{1}{2}$ when $N$ is even and $|H|$ is odd, and $c=\frac{|H|-1}{2|H|}$ otherwise.
\end{thm}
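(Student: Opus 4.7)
The plan is to derive the upper bound from Theorem~\ref{mainthm2} applied with $h(t)=1-t$ (which satisfies $h(0)=1$ and $h(t)\mid t^n-1$ where $n=|H|$), and to derive the lower bound from a coset construction that reduces to an Alon-type sum-constrained set inside $H^N$. The ``in particular'' clause of Theorem~\ref{mainthm2} already yields the crude bound $(|G|-[G:H])^N$; the point of Theorem~\ref{mainthm4} is to save the constant $c$ from the finer count in the general version of Theorem~\ref{mainthm2}.

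For the upper bound, the identity $1-e^{i\theta}=2\sin(\theta/2)\exp\bigl(i(\theta/2-\pi/2)\bigr)$ factorizes the product as
\[
\prod_{j=1}^N\bigl(1-e_n(v_j)\bigr)=\Bigl(\prod_{j=1}^N 2\sin(\pi v_j/n)\Bigr)\,\exp\!\Bigl(\tfrac{i\pi(2S-Nn)}{2n}\Bigr),\qquad S=\textstyle\sum_j v_j.
\]
Any coordinate with $v_j=0$ kills the product, and otherwise the magnitude factor is strictly positive; thus $\Re\prod\geq 1$ forces the cosine $\cos\bigl(\pi(2S-Nn)/(2n)\bigr)$ to be positive. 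So the count in Theorem~\ref{mainthm2} is at most the number of $\vec v\in\{1,\ldots,n-1\}^N$ with $T(\vec v):=\sum_j(2v_j-n)\pmod{4n}$ lying in $R:=\{0,1,\ldots,n-1\}\cup\{3n+1,\ldots,4n-1\}$. Since each summand $w_j=2v_j-n$ has the same parity as $n$, the walk $T$ is confined to one parity class $P\subset\Z/4n\Z$ of size $2n$. A character-sum computation gives $F(k):=\sum_{w}e^{\pi ikw/(2n)}$ with $|F(k)|=n-1$ only at $k\in\{0,2n\}$ (where $F(2n)=(-1)^n(n-1)$) and $|F(k)|\leq(n-1)(1-\delta)$ for the remaining $k$. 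Fourier inversion then yields
\[
\#\bigl\{\vec v\in\{1,\ldots,n-1\}^N:T\equiv t\pmod{4n}\bigr\}=\frac{(n-1)^N}{4n}\bigl[1+(-1)^{t+nN}\bigr]+O(\lambda^N),
\]
which equidistributes $T$ over $P$ at the rate $(n-1)^N/(2n)$ with exponentially smaller error. An elementary parity count of $|R\cap P|$ in the three cases yields $n-1$ residues when $n$ is even, $n$ residues when $n$ is odd and $N$ is even, and $n-1$ residues when $n$ and $N$ are both odd; multiplying by $(n-1)^N/(2n)$ and then by $[G:H]^N$ produces the three declared values of $c$.

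For the lower bound, a fiber/coset argument gives $D_G(\{0,a\},N)\geq[G:H]^N D_H(\{0,a\},N)$: if $C\subset G$ is a transversal for $H$ and $B\subset H^N$ is admissible, then $A=C^N+B$ is admissible too, since any difference of two elements of $A$ whose every coordinate lies in $H$ must come from a single coset fiber and therefore lie in $B-B$. Identifying $H\cong\Z_n$ and $a$ with $1$, I would follow Alon's approach by taking the integer alphabet $\Sigma=\{2,3,\ldots,n\}\subset\Z$ (which reduces mod $n$ to $\Z_n\setminus\{1\}$) and setting $A_s=\{\vec x\in\Sigma^N:\sum_j x_j=s\}$ for $s\in\Z$. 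Because $|x_j-y_j|\leq n-2$, any coordinate of $x-y$ that reduces to $0$ or $1\pmod n$ must equal $0$ or $1$ in $\Z$; coupled with $\sum_j(x_j-y_j)=0$ this forces $x=y$, so $A_s$ is admissible. A local central limit estimate for i.i.d.\ sums on $\Sigma$ (mean $(n+2)/2$, variance $\Theta(n^2)$) then gives $|A_s|\gg(n-1)^N/(n\sqrt N)$ for $s$ near the mean. The main obstacle throughout is the parity bookkeeping in the Fourier step: the distinction between $c=1/2$ and $c=(|H|-1)/(2|H|)$ is controlled entirely by the interplay between the parity class $P$ in which $T$ lives and the cosine-positive set $R$ modulo $4n$.
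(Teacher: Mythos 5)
Your proposal is correct, and while the skeleton (Theorem~\ref{mainthm2} with $h(t)=1-t$, the factorization $1-e_n(v)=2\sin(\pi v/n)e^{i(\pi v/n-\pi/2)}$, and an Alon-type fixed-sum construction inside $H^N$ for the lower bound) matches the paper, your treatment of the key counting step is genuinely different. The paper controls the distribution of $S_N=\sum_j v_j$ modulo $2n$ probabilistically: it invokes Gnedenko's local limit theorem to compare adjacent length-$n$ windows, sums over $0\le k\le C\sqrt N$, exploits the symmetry of $S_N-nN/2$, and uses Berry--Esseen to dispose of the central band and the tail, finally letting $C\to\infty$; the constant $c$ emerges from whether the event $\cos(\pi S_N/n-\pi N/2)\le 0$ occupies $n$ or $n+1$ residue classes mod $2n$. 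You instead run an exact Fourier inversion over $\Z/4n\Z$ for the lattice walk $T=2S_N-nN$: since $n$ is fixed, the one-step character sums $F(k)$ have modulus $n-1$ only at $k\in\{0,2n\}$ and are bounded by $(n-1)(1-\delta)$ elsewhere, so each admissible residue class receives $(n-1)^N/(2n)+O(\lambda^N)$ vectors, and $c$ reduces to the parity count $|R\cap P|\in\{n-1,n\}$, which you compute correctly in all three cases. This buys you an exponentially small relative error in place of the paper's $o(1)$, and it avoids Berry--Esseen and the $C\to\infty$ limiting argument entirely; the paper's probabilistic route is more robust machinery (it is also what drives the lower bound), but for this fixed finite alphabet your spectral-gap computation is cleaner and strictly stronger. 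Your lower bound is essentially the paper's: the paper cites its Proposition~\ref{lb} (an exact component decomposition giving $D_G=[G:H]^N D_H$), whereas you prove the needed inequality directly via the transversal set $A=C^N+B$, and your alphabet $\{2,\dots,n\}$ is a shift of the paper's $\{0,\dots,n-2\}$; both rely on the same local limit estimate (the paper's Corollary~\ref{central}) to get the factor $1/(n\sqrt N)$, which mere pigeonholing would not give.
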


In particular, when $G=\F_q$ and $a=1$, Theorem~\ref{mainthm4} immediately implies the following corollary, which further improves Theorem~\ref{thmHKP}. Note that when $q$ is a power of $2$, it is easy to show that $D_{\F_q}(\{0,1\},N)=(\frac{q}{2})^N$; see Proposition~\ref{lb}.

\begin{cor}\label{corvs}
If $q=p^m$ for an odd prime $p$, then as $N \to \infty$,
$$
\frac{\big((p-1)p^{m-1}\big)^N}{p\sqrt{N}} \ll D_{\F_q}(\{0,1\},N)\leq \big(c+o(1)\big)\big((p-1)p^{m-1}\big)^N,
$$
where $c=\frac{1}{2}$ when $N$ is even, and $c=\frac{p-1}{2p}$ when $N$ is odd.
\end{cor}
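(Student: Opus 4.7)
The plan is to deduce Corollary~\ref{corvs} as a direct specialization of Theorem~\ref{mainthm4}. I take $G=\F_q$, viewed as an additive abelian group (so $G\cong(\Z/p\Z)^m$), and $a=1\in\F_q$. Since $p$ is odd, $1\neq -1$ in $\F_q$, which verifies the hypothesis $a\neq -a$ of Theorem~\ref{mainthm4}.

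Next I identify the cyclic subgroup $H\leq G$ generated by $a=1$. Under the additive identification, this is exactly the prime subfield $\F_p\subset \F_q$, so $H\cong \Z/p\Z$ and $|H|=p$. Consequently, $[G:H]=q/p=p^{m-1}$, and thus
\[
|G|-[G:H]=p^{m}-p^{m-1}=(p-1)p^{m-1}.
\]
Plugging these values into the lower bound of Theorem~\ref{mainthm4} gives $\big((p-1)p^{m-1}\big)^{N}/(p\sqrt{N})$, which is precisely the lower bound claimed in the corollary.

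For the upper bound, I inspect the constant $c$ in Theorem~\ref{mainthm4}. Because $p$ is odd, $|H|=p$ is always odd, so the condition ``$|H|$ odd'' is automatically satisfied. Theorem~\ref{mainthm4} therefore yields $c=\tfrac{1}{2}$ when $N$ is even, and $c=\tfrac{|H|-1}{2|H|}=\tfrac{p-1}{2p}$ when $N$ is odd, matching the statement of the corollary.

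There is no real obstacle here; the entire content of the corollary is packaged inside Theorem~\ref{mainthm4}. The only subtle point worth flagging is the choice of embedding: we must use the \emph{additive} structure of $\F_q$ so that $1$ generates the prime subfield $\F_p$ (and not, say, the full multiplicative group), and it is precisely this choice that produces the index $[G:H]=p^{m-1}$ appearing in the bound.
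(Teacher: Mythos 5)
Your proposal is correct and matches the paper exactly: the corollary is stated there as an immediate specialization of Theorem~\ref{mainthm4} with $G=\F_q$ (additive group), $a=1$, so that $H$ is the prime subfield with $|H|=p$ odd, $[G:H]=p^{m-1}$, and the constant $c$ reads off as $\tfrac12$ for $N$ even and $\tfrac{p-1}{2p}$ for $N$ odd. Nothing further is needed.
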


Heged\H{u}s \cite{HG20} remarked that the upper bound provided in Theorem~\ref{thmH} (and in general Theorem~\ref{thmG}) is probably not optimal in general: the linear algebra method he used does not capture the additive structure of the set $J$ fully since the upper bound only depends on $|J|$. Proposition~\ref{propa} and Proposition~\ref{propb} give simple examples for which the upper bound on $D_G(J,N)$ is far from tight. Note that Theorem~\ref{mainthm2} implies that if there is a polynomial $h(t)$ satisfies the required assumptions such that $h(t) \mid (t^n-1)$ and the degree of $h$ is at least $|J|-1$, then we recover Theorem~\ref{thmG}; moreover, if there is some integer $1\leq k< \deg (h)$ such that the coefficient of $t^k$ in $h(t)$ is $0$, then we manage to improve the upper bound in Theorem~\ref{thmG} exponentially. Thus, Theorem~\ref{mainthm2} allows us to find more complicated examples for which the upper bound in Theorem~\ref{thmG} can be improved exponentially; see Example~\ref{examples}.

Our next result provides a systematic approach to construct $G$ and $J$ so that the $(|G|-|J|+1)^N$ upper bound can be improved to $(|G|-|G|^{1-\epsilon}|J|)^N$. Indeed, as $|G| \to \infty$, the upper bound can be further strengthened to be $(|G|-o(|G|)|J|)^N$; see Remark~\ref{remo} for more details.

\begin{thm}\label{mainthm3}
Let $\epsilon \in (0,1)$ and $M$ be a positive real number. Then there are infinitely many positive integers $n$ together with a subset $J \subset \Z_n$, such that
$$
D_{\Z_n}(J,N) \leq (n-d)^N
$$
holds for all positive integers $N$, where $n,J,d$ satisfy the following conditions:
\begin{itemize}
    \item $n$ has at least $M$ distinct prime divisors and the smallest prime divisor of $n$ is at least $M$;
    \item $J \cap (-J)=\{0\}$, $|J|\geq n^{\epsilon/3}$, and $J$ generates $\Z_n$;
    \item $d$ is an integer such that $d \geq \frac{1}{8}n^{1-\epsilon/3}$ and $d \geq \frac{1}{16} n^{1-\epsilon}|J|$;
    \item for each $a \in J\setminus\{0\}$, the subgroup $H$ generated by $a$ has index $[\Z_n:H]\leq d-\frac{1}{24}n^{1-\epsilon/3}$.
\end{itemize}
\end{thm}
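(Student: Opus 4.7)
My plan is to apply Theorem~\ref{mainthm2} with $G = H = \Z_n$ to a carefully chosen polynomial $h(t) \in \Z[t]$ dividing $t^n - 1$. I would take $n = q_1 q_2 \cdots q_r$ squarefree with odd prime divisors $M \le q_1 < q_2 < \cdots < q_r$ and $r \ge \max(M, 4)$, choosing $q_1$ and $q_2$ both of order $n^{\epsilon/3}$ (concretely, $M \le q_1 \le c_1 n^{\epsilon/3}$ and $q_1 < q_2 \le c_2 n^{\epsilon/3}$ for suitable absolute constants $c_1, c_2$) and letting $q_3, \ldots, q_r$ absorb the rest of the product; infinitely many such $n$ arise by letting $r$ and the largest primes vary. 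With $k := q_3 q_4 \cdots q_r = n/(q_1 q_2)$, set
$$
h(t) := \Phi_{q_2}(t)\cdot \Phi_{q_1}(t^k) = (1 + t + \cdots + t^{q_2 - 1})\bigl(1 + t^k + t^{2k} + \cdots + t^{(q_1 - 1)k}\bigr).
$$
The factor $\Phi_{q_2}(t)$ injects $1$ into the support $J := \operatorname{supp}(h)$ so that $J$ generates $\Z_n$, while $\Phi_{q_1}(t^k)$ pumps the degree up to approximately $n/q_2$ at very low density.

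The verification proceeds in several short steps. Since $\gcd(q_1, k) = 1$, one has $\Phi_{q_1}(t^k) = \prod_{e \mid k} \Phi_{q_1 e}(t)$, and $\Phi_{q_2}$ is distinct from every $\Phi_{q_1 e}$ (because $q_2 \neq q_1 e$ for any positive integer $e$), so the factors of $h$ are pairwise distinct cyclotomic polynomials each dividing $t^n - 1$; thus $h \mid (t^n - 1)$ and $h(0) = 1$. Both factors have nonnegative coefficients, so no cancellation occurs in the product, giving
$$
J = \{i + jk : 0 \le i \le q_2 - 1,\ 0 \le j \le q_1 - 1\},\qquad |J| = q_1 q_2,\qquad d := \deg h = (q_2 - 1) + (q_1 - 1)k.
$$
Since $1 \in J$, the set $J$ generates $\Z_n$. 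For $J \cap (-J) = \{0\}$: any $a, a' \in J \setminus \{0\}$ with $a + a' \equiv 0 \pmod n$ satisfy $a + a' < 2n$, forcing $a + a' = n = q_1 q_2 k$; reducing modulo $k$ and using $k \gg 2q_2$ forces $i = i' = 0$, after which $j + j' = q_1 q_2$ is impossible since $j + j' \le 2(q_1 - 1) < q_1 q_2$. The numerical conditions $|J| \ge n^{\epsilon/3}$, $d \ge n^{1-\epsilon/3}/8$, and $d \ge n^{1-\epsilon}|J|$ all follow from the size bounds on $q_1, q_2$ by direct calculation.

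For the final index condition on $a = i + jk \in J \setminus \{0\}$ I would split into cases: if $i = 0$ then $a = jk$ with $0 < j < q_1 < q_2$, so $\gcd(j, q_1 q_2) = 1$ and $\gcd(a, n) = k$; if $i > 0$ then $a \equiv i \pmod{q_\ell}$ for each $\ell \ge 3$ with $0 < i < q_2 \le q_\ell$, forcing $\gcd(a, n) \mid q_1 q_2$. In either case $\gcd(a, n) \le \max(k, q_1 q_2)$, which by the size bounds is at most $d - n^{1-\epsilon/3}/24$.

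The main obstacle is the asymmetric ordering $q_1 < q_2$ in the construction. With the opposite ordering, $j$ could range through multiples of $q_2$ within $[0, q_1 - 1]$, and the resulting element $a = jk$ would satisfy $\gcd(a, n) = kq_2 = n/q_1$, which is too close to $d \approx n/q_2$ to meet the final index condition. Simultaneously guaranteeing that every nonzero element of $J$ generates a subgroup of small index, that $|J| \ge n^{\epsilon/3}$, and that $d/|J| \ge n^{1-\epsilon}$, all inside a single divisor of $t^n - 1$, is the heart of the argument; the asymmetric choice $q_1 < q_2$ together with the coprimality of $k$ to both $q_1$ and $q_2$ isolates the worst case and makes the arithmetic balance.
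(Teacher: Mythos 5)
Your overall strategy is the same as the paper's: apply Theorem~\ref{mainthm2} to a product of two cyclotomic factors dividing $t^n-1$, namely a dense low-degree factor $1+t+\cdots+t^{q_2-1}$ that puts $1$ into $J$, times a sparse factor $\Phi_{q_1}(t^{k})$ whose large exponent gaps push the degree up to about $n^{1-\epsilon/3}$ while contributing few terms; your verification that $h\mid t^n-1$, the description of the support, the check that $J\cap(-J)=\{0\}$, and the gcd case analysis are all sound under your hypotheses. The genuine gap is in the choice of parameters: you insist that $n=q_1q_2\cdots q_r$ be squarefree with $q_1<q_2<\cdots<q_r$, $r\ge M$, and with the two \emph{smallest} prime factors $q_1,q_2$ of order $n^{\epsilon/3}$. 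These demands are incompatible once $M$ is large compared with $1/\epsilon$. Indeed, $|J|=q_1q_2\ge n^{\epsilon/3}$ together with $q_1<q_2$ forces $q_2>n^{\epsilon/6}$, and every prime factor other than $q_1$ is at least $q_2$, so $n\ge q_1q_2^{\,r-1}>n^{(r-1)\epsilon/6}$, whence $r<6/\epsilon+1$ for large $n$. Thus for, say, $\epsilon=1/10$ and $M=1000$ no admissible $n$ exists and the first bullet of the theorem (at least $M$ distinct prime divisors) cannot be met: your construction ties the number of prime divisors to $\epsilon$, whereas the theorem requires it to be controlled by the independent parameter $M$. Your closing paragraph worries about the ordering $q_1<q_2$ and the size of $\gcd(jk,n)$, but this cap on the number of prime divisors is the real obstruction.

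The paper avoids exactly this by abandoning squarefreeness: it takes $r=p_1\cdots p_m$ a product of $m\ge M$ primes, each larger than $M$ but of size independent of $n$, a prime $Q\in(r,2r)$, and $n=Qr^s$ with $s$ roughly $3/\epsilon-1$, and uses $h(t)=\Phi_Q(t)\Phi_{r^s}(t)=(1+t+\cdots+t^{Q-1})\Phi_r(t^{r^{s-1}})$. There the gaps of the sparse factor come from the prime power $r^{s-1}$ rather than from introducing large new primes, so the count of distinct prime divisors of $n$ is decoupled from $\epsilon$. Your argument could probably be repaired in a similar spirit by letting the primes inside $k$ be small (at least $M$) rather than larger than $q_2$, but then the step ``$0<i<q_2\le q_\ell$ forces $\gcd(a,n)\mid q_1q_2$'' breaks, since a small prime of $k$ dividing $i$ now divides $a$; one must instead bound $\gcd(a,n)\le q_1q_2\cdot i<q_1q_2^2\lesssim n^{\epsilon}$ and check this against $d-\frac{1}{24}n^{1-\epsilon/3}$. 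As written, however, the proposal does not prove the theorem for all $M$.
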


Note that if we do not assume that $J \cap (-J)=\{0\}$, Theorem~\ref{mainthm3} is simply implied by Proposition~\ref{propb}. We also assume that $J$ generates $\Z_n$ to avoid degeneracy; see Proposition~\ref{lb}. The last condition guarantees that Theorem~\ref{mainthm3} cannot be deduced directly from Theorem~\ref{mainthm4}. In other words, the set $J$ cannot be replaced by a subset $\{0,a\}$ of $J$ to obtain the same upper bound; in fact $D_{\Z_n}(J,N)$ is exponentially smaller than $D_{\Z_n}(\{0,a\},N)$ in view of the lower bound in Theorem~\ref{mainthm4}.

\medskip

The rest of this paper is organized as follows. In Section~\ref{prelim}, we recall some basic terminologies and give some preliminary results. In Section~\ref{thm23}, we prove Theorem~\ref{mainthm2} and Theorem~\ref{mainthm3}. Then the proof of Theorem~\ref{mainthm4} is presented in Section~\ref{thm4}. Finally, we conclude and pose some open problems in Section~\ref{conclusion}.

\section{Preliminaries}\label{prelim}
In this section, we list some preliminary definitions, basic properties, and sporadic new results.

\subsection{Characters of finite abelian groups}
Let $G$ be a finite abelian group. Recall that a {\em character} $\psi$ of $G$ is a homomorphism from $G$ to $S^1=\{z \in \C: |z|=1\}$, that is, $\psi(g+h)=\psi(g)\psi(h)$. The set of characters of $G$ forms a group, denoted $\hat{G}$, the dual group of $G$; moreover, $G \cong \hat{G}$. Here we list a few basic properties of $\hat{G}$; we refer to the expository by Conrad \cite{Characters} for a general discussion.

\begin{lem} [{\cite[Lemma 3.12]{Characters}}]\label{directproduct}
Let $G$ and $H$ be finite abelian groups. Then $\widehat{G \times H} \cong \widehat{G} \times \widehat{H}$. More precisely,
$
\widehat{G \times H}=\{\chi \times \psi: \chi \in \hat{G}, \psi \in \hat{H}\},
$
where $(\chi \times \psi)(g,h)=\chi(g)\psi(h)$ for any $g \in G$ and $h \in H$.
\end{lem}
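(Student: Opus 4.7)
The plan is to construct an explicit isomorphism $\hat{G}\times\hat{H}\to\widehat{G\times H}$ via the assignment $(\chi,\psi)\mapsto\chi\times\psi$, where $(\chi\times\psi)(g,h):=\chi(g)\psi(h)$, and then to read off both statements of the lemma from the fact that this map is a well-defined group isomorphism.

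First I would check that $\chi\times\psi$ really is a character of $G\times H$: it takes values in $S^1$ as a product of two unit-modulus complex numbers, and the homomorphism property reduces, coordinate-wise, to the homomorphism properties of $\chi$ and $\psi$. Next I would verify that the assignment itself is a group homomorphism from $\hat{G}\times\hat{H}$ (under pointwise multiplication of characters in each factor) to $\widehat{G\times H}$ (also under pointwise multiplication); this is a direct two-line computation.

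For bijectivity the cleanest route is to establish surjectivity and appeal to the pigeonhole: given any $\phi\in\widehat{G\times H}$, define $\chi(g):=\phi(g,0)$ and $\psi(h):=\phi(0,h)$. Each of $\chi,\psi$ is easily seen to be a character of the respective factor because $\phi$ is a homomorphism, and since $(g,h)=(g,0)+(0,h)$ in $G\times H$, applying $\phi$ yields $\phi(g,h)=\chi(g)\psi(h)$, so $\phi=\chi\times\psi$ lies in the image. Injectivity is then immediate: if $\chi\times\psi$ is trivial, then $\chi(g)=(\chi\times\psi)(g,0)=1$ for every $g\in G$, and analogously for $\psi$. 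Alternatively, one can invoke the standard fact $|\hat{K}|=|K|$ for any finite abelian group $K$, so $|\hat{G}\times\hat{H}|=|G||H|=|G\times H|=|\widehat{G\times H}|$, and either injectivity or surjectivity alone forces bijectivity.

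There is no genuine obstacle here, since the statement is a routine structural fact about dual groups. The only small care needed is notational: one must distinguish the pointwise product of two characters of the same group (the group operation on $\hat{G}$ or $\hat{H}$) from the construction $\chi\times\psi$, which combines two characters of \emph{different} groups into one character of the product. Keeping these straight makes the homomorphism verification and the identification of the image transparent.
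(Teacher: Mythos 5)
Your proof is correct and complete: the well-definedness of $\chi\times\psi$, the homomorphism check, surjectivity via $\chi(g):=\phi(g,0)$, $\psi(h):=\phi(0,h)$, and injectivity (or the cardinality count $|\hat K|=|K|$) together give exactly the claimed identification. The paper itself does not prove this lemma but cites it from Conrad's notes, and your argument is essentially the standard one given there, so there is nothing to reconcile.
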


\begin{lem} [{\cite[Theorem 3.4]{Characters}}]\label{extn}
Let $G$ be a finite abelian group and $H$ be a subgroup of $G$. Then each character of $H$ can be extended to be a character of $G$ in $[G:H]$ ways, and each character of $G$ restricted to $H$ is a character of $H$.
\end{lem}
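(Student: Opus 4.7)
The plan is to prove the lemma via the restriction map $r\colon \hat{G}\to \hat{H}$, $\psi\mapsto \psi|_{H}$, by showing that $r$ is a surjective group homomorphism whose kernel has size $[G:H]$. Once this is established, every fiber of $r$ (being a coset of $\ker r$) has exactly $[G:H]$ elements, which is precisely the claim about the number of extensions. The very last assertion of the lemma, that restriction of a character is a character, is immediate from the fact that a homomorphism $G\to S^{1}$ restricted to a subgroup is still a homomorphism, so I would dispatch it in a single sentence.

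The heart of the argument is the surjectivity of $r$. Here I would proceed by induction on $[G:H]$; equivalently, it suffices to show that any character of a subgroup $H$ extends to a character of $H':=\langle H,g\rangle$ for any $g\in G\setminus H$, and then iterate. For this step, let $k\ge 1$ be the smallest positive integer with $kg\in H$, so that $[H':H]=k$ and every element of $H'$ can be written (not uniquely) as $h+mg$ with $h\in H$ and $m\in\Z$. Given $\chi\in\hat{H}$, I would pick any $k$-th root $\zeta$ of $\chi(kg)\in S^{1}$ and define $\chi'(h+mg):=\chi(h)\zeta^{m}$. The main obstacle, and really the only technical point in the whole proof, is well-definedness: if $h_{1}+m_{1}g=h_{2}+m_{2}g$, then $(m_{2}-m_{1})g=h_{1}-h_{2}\in H$, forcing $k\mid m_{2}-m_{1}$, say $m_{2}-m_{1}=k\ell$; then
\[
\zeta^{m_{2}-m_{1}}=\zeta^{k\ell}=\chi(kg)^{\ell}=\chi(k\ell g)=\chi(h_{1}-h_{2}),
\]
which is exactly what is needed. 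A direct check shows $\chi'$ is a homomorphism to $S^{1}$, giving the desired extension.

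For the kernel computation, I would observe that $\ker r$ consists of those characters of $G$ that are trivial on $H$, which factor uniquely through the quotient map $G\to G/H$. Hence $\ker r$ is in natural bijection with $\widehat{G/H}$, and the standard duality $|\widehat{G/H}|=|G/H|=[G:H]$ (which itself follows, e.g., from the structure theorem applied to the finite abelian group $G/H$, or by a separate small induction) yields $|\ker r|=[G:H]$.

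Combining the two parts: since $r$ is a surjective homomorphism of finite abelian groups with $|\ker r|=[G:H]$, every fiber $r^{-1}(\chi)$ for $\chi\in\hat{H}$ is a coset of $\ker r$ and therefore has cardinality exactly $[G:H]$. This is the required count of extensions, completing the proof. The only delicate step to get right is the well-definedness of the one-generator extension $\chi'$; everything else is bookkeeping.
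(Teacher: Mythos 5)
Your argument is correct, and there is in fact nothing in the paper to compare it against: the lemma is stated with a citation to Conrad's notes (Theorem 3.4 there) and is not proved in the paper itself. What you write is essentially the standard proof found in that reference --- extend a character one cyclic step at a time, the only genuine point being the well-definedness of $\chi'(h+mg)=\chi(h)\zeta^{m}$, which your computation $\zeta^{m_2-m_1}=\chi(kg)^{\ell}=\chi(h_1-h_2)$ handles correctly. The one difference is how the number of extensions is obtained: the cited proof counts the $k=[H':H]$ admissible choices of the root $\zeta$ at each one-generator step, so the total multiplies up to $[G:H]$ directly, whereas you deduce it from the fibers of the restriction map being cosets of $\ker r\cong\widehat{G/H}$ together with the identity $|\widehat{G/H}|=[G:H]$. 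That identity is the only ingredient you delegate (to the structure theorem or a small induction), which is harmless; within this paper it also follows from Lemma~\ref{directproduct} together with the explicit description of the characters of $\Z_n$ stated immediately after it.
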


\begin{lem} [{\cite[Theorem 3.1]{Characters}}]
If $G=\Z_n$, then $\widehat{G}=\{\psi_c: c \in G\},$
where $\psi_c(x)=e_n(cx)$ for all $x \in G$.
\end{lem}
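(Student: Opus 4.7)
The plan is to verify the two inclusions directly, using nothing more than the defining property of a character and the fact that $\Z_n$ is cyclic with distinguished generator $1$. The content is entirely standard; the lemma is cited from an expository note, so a brief self-contained proof is natural.

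First I would check that each $\psi_c$ is indeed a character: for $x,y \in \Z_n$, the identity $\psi_c(x+y)=e_n(c(x+y))=e_n(cx)\,e_n(cy)=\psi_c(x)\psi_c(y)$ is immediate from the functional equation $\exp(a+b)=\exp(a)\exp(b)$, and $|\psi_c(x)|=1$ holds since $e_n(cx)$ lies on the unit circle. One must observe that $\psi_c$ is well-defined on $\Z_n$ (not merely on $\Z$) because $e_n(cx)$ depends only on $cx \bmod n$. Thus $\{\psi_c : c \in \Z_n\} \subset \widehat{\Z_n}$.

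Next I would establish the reverse inclusion. A character $\chi$ of $\Z_n$ is a homomorphism, so it is completely determined by $\chi(1)$ via $\chi(x)=\chi(1)^x$. Since $n \cdot 1 = 0$ in $\Z_n$, we have $\chi(1)^n = \chi(0)=1$, so $\chi(1)$ is an $n$-th root of unity in $\C$. Hence there exists a unique $c \in \{0,1,\ldots,n-1\}$ with $\chi(1)=e_n(c)$, and then $\chi(x)=e_n(c)^x=e_n(cx)=\psi_c(x)$, giving $\chi=\psi_c$. The same uniqueness of $c$ shows that distinct $c \in \Z_n$ produce distinct characters $\psi_c$, since $\psi_c(1)=e_n(c)$ determines $c$ mod $n$. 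Combining the two inclusions yields the claimed equality $\widehat{\Z_n}=\{\psi_c : c \in \Z_n\}$.

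There is essentially no obstacle here; the only subtlety worth flagging is the well-definedness of $\psi_c$ on $\Z_n$, which is where the choice of representatives implicit in the notation $e_n(x)=\exp(2\pi i x/n)$ (with $\Z_n$ embedded in $\Z$) needs to be handled with care. Everything else is bookkeeping with the cyclic group structure.
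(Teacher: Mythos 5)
Your proof is correct and is essentially the standard argument that the paper implicitly relies on by citing Conrad's notes: each $\psi_c$ is a well-defined character, and any character of $\Z_n$ is determined by $\chi(1)$, which must be an $n$-th root of unity, hence equals some $e_n(c)$. No gaps; the well-definedness point you flag is handled correctly.
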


The above two lemmas immediately imply the following corollary.

\begin{cor}\label{charextn}
Let $G$ be a finite abelian group and $H$ be the subgroup of $G$ generated by $a$ with $|H|=n$. Then for any $x \in \Z_n$, the number of characters $\chi$ of $G$ such that $\chi(a)=e_n(x)$ is exactly $[G:H]$.
\end{cor}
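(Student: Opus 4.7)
The plan is to combine the three preceding lemmas into a short counting argument, with the cyclic-character lemma supplying a bijection between $\Z_n$ and the characters of $H$, and the extension lemma counting preimages.

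First I would identify $H$ with $\Z_n$ by sending $a$ to the standard generator; this is legitimate since $H$ is cyclic of order $n$ generated by $a$. Under this identification, the lemma on characters of cyclic groups tells me that the characters of $H$ are precisely the homomorphisms $\psi_x$ (for $x \in \Z_n$) determined by $\psi_x(a) = e_n(x)$. In particular, for each prescribed $n$-th root of unity $e_n(x)$ there is exactly one character of $H$ sending $a$ to $e_n(x)$.

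Next, for a fixed $x \in \Z_n$, I would observe that a character $\chi$ of $G$ satisfies $\chi(a) = e_n(x)$ if and only if its restriction to $H$ coincides with $\psi_x$; this equivalence uses only that $a$ generates $H$, so the value $\chi(a)$ determines $\chi|_H$. Then Lemma~\ref{extn} says that $\psi_x$ admits exactly $[G:H]$ extensions to characters of $G$, and every character of $G$ restricts to a character of $H$. Combining these two facts gives the desired count of $[G:H]$ characters of $G$ with $\chi(a) = e_n(x)$.

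There is no substantive obstacle: the three preceding lemmas are tailored to yield this corollary, and the only care needed is the observation that specifying $\chi(a)$ is the same as specifying $\chi|_H$ since $\langle a \rangle = H$.
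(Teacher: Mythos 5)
Your proposal is correct and matches the paper's intended argument: the paper derives Corollary~\ref{charextn} immediately from the lemma describing $\widehat{\Z_n}$ (giving the unique character $\psi_x$ of $H$ with $\psi_x(a)=e_n(x)$) together with Lemma~\ref{extn} (each character of $H$ has exactly $[G:H]$ extensions, and every character of $G$ restricts to one of $H$). Your added remark that $\chi(a)$ determines $\chi|_H$ because $a$ generates $H$ is exactly the observation that makes the counting go through.
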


\subsection{Eigenvalues of Cayley graphs}
Given an abelian group $G$ and a connection set $S \subset G \setminus \{0\}$ with $S=-S$, the {\em Cayley graph} $\operatorname{Cay}(G,S)$ is
the graph whose vertices are elements of $G$, such that two vertices $a$ and $b$ are adjacent if and only if $a-b \in S$. The condition $S=-S$ guarantees that $G$ is undirected.

Eigenvalues of Cayley graphs are well-studied; we refer to the nice survey by Liu and Zhou \cite{LZ22}, in particular \cite[Section 2.2]{LZ22}. It is well-known that the eigenvalues of a Cayley graph correspond to character sums over the connection set. For the sake of completeness, we provide a short proof for the fact that the eigenvalues of a weighted Cayley graph correspond to weighted character sums over the connection set.

\begin{lem}\label{ev}
Let $G$ be a finite abelian group. Consider the Cayley graph $X=\operatorname{Cay}(G,S)$. Let $f:G \to \R$ be a function supported on $S$ and let $M$ be a $|G| \times |G|$ matrix whose rows and columns are indexed by elements of $G$, such that $M_{a,b}=f(a-b)$. Then characters of $G$ are  eigenfunctions of $M$ that form a basis of $\R^{|G|}$, and the eigenvalue corresponding to the character $\chi$ is $$\sum_{x \in S}f(x)\chi(-x).$$
\end{lem}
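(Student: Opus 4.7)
The proof will be a direct computation using the substitution $x = a-b$ in the matrix-vector product. Here is the plan.

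First, I would view each character $\chi$ of $G$ as a column vector $v_\chi \in \mathbb{C}^{|G|}$ whose $b$-th entry is $\chi(b)$. The central calculation is then to evaluate $(Mv_\chi)_a$ and show it factors as a scalar times $\chi(a)$. Explicitly,
\[
(Mv_\chi)_a = \sum_{b \in G} M_{a,b}\chi(b) = \sum_{b \in G} f(a-b)\chi(b).
\]
Substituting $x = a - b$ (so $b = a - x$, and as $b$ ranges over $G$ so does $x$) and using multiplicativity of $\chi$ gives
\[
(Mv_\chi)_a = \sum_{x \in G} f(x)\chi(a-x) = \chi(a)\sum_{x \in G} f(x)\chi(-x).
\]
Because $f$ is supported on $S$, the outer sum collapses to $\sum_{x \in S} f(x)\chi(-x)$, so $v_\chi$ is an eigenvector of $M$ with the claimed eigenvalue
\[
\lambda_\chi = \sum_{x \in S} f(x)\chi(-x).
\]

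Second, to verify that the characters actually form a basis (so that $M$ is diagonalised by them), I would invoke the standard orthogonality relations for characters of a finite abelian group: distinct characters of $G$ are orthogonal with respect to the Hermitian inner product on $\mathbb{C}^{|G|}$, hence linearly independent. Since $|\hat G| = |G|$, the $|G|$ characters form a basis of $\mathbb{C}^{|G|}$ (I would quietly replace the statement's $\R^{|G|}$ by $\C^{|G|}$, or else pair conjugate characters to obtain a real basis when $M$ is symmetric, i.e.\ when $f$ is even on $S=-S$).

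There is no real obstacle in this argument: the only nontrivial ingredient is the variable change $x = a-b$, and the only subtlety worth flagging is the field over which one claims a basis. The whole proof should fit in a handful of lines.
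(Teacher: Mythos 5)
Your argument is essentially identical to the paper's proof: the same change of variables $x=a-b$ in the matrix--vector product yields $(M\chi)_a=\chi(a)\sum_{x\in S}f(x)\chi(-x)$, and the basis claim is then settled, as in the paper, by linear independence of distinct characters together with $|\widehat{G}|=|G|$. Your remark that the basis really lives in $\C^{|G|}$ rather than $\R^{|G|}$ is a fair observation about a slight imprecision in the statement that the paper's proof also glosses over; it does not affect how the lemma is used.
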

\begin{proof}
For each $z \in G$, we have
$$
(M\chi)_z=\sum_{y \in G} M_{z,y} \chi(y)=\sum_{y \in G} f(z-y)\chi(y)=\sum_{x \in S} f(x) \chi(z-x)=\chi(z) \bigg(\sum_{x \in S}f(x)\chi(-x)\bigg).
$$
So $\chi$ is an eigenvector of $M$ with the desired eigenvalue. The lemma follows from the fact that the distinct characters of $G$ are linearly independent and that $|G|=|\hat{G}|$.
\end{proof}

\subsection{Upper bounds on independence number}
Let $X$ be a graph with the vertex set $V$. A subset $I \subset V$ is called an {\em independent set} in $X$ if no two vertices in $I$ are adjacent in $X$. The {\em independence number} of $X$, denoted $\alpha(X)$, is the maximum size of an independent set in $X$. A subset $C \subset V$ is called a {\em clique} in $X$ if any two vertices in $I$ are adjacent in $X$. The {\em clique number} of $X$, denoted $\omega(X)$, is the maximum size of a clique in $X$.

Let $X$ be an $n$-vertex graph. $M$ is called a {\em pseudo-adjacency matrix} of $X$ if $M$ is a real symmetric $n \times n$ matrix such that $M_{ij}=0$ whenever $ij \not\in E(X)$. The following theorem is due to {C}vetkovi\'{c} (also known as the inertia bound); see for example \cite[Corollary 2.5]{HKP20}.

\begin{thm}[{C}vetkovi\'{c} bound]\label{Cv}
Let $X$ be a graph and let $M$ be a pseudo-adjacency matrix of $X$. Let $n_{\le 0}(M)$ (resp. $n_{\ge 0}(M)$) be the number of non-positive (resp. non-negative) eigenvalues of $M$.  Then
$$\alpha(X) \le \min \{n_{\le 0}(M), n_{\ge 0}(M) \}.$$
\end{thm}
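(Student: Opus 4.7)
The plan is to deduce the bound from Cauchy's interlacing theorem applied to a principal submatrix of $M$ indexed by a maximum independent set. First I would pick an independent set $I\subseteq V(X)$ with $|I|=\alpha(X)$ and form the principal submatrix $M[I]$ by restricting $M$ to the rows and columns indexed by $I$. The crucial observation is that $M[I]$ is the zero matrix: for distinct $i,j\in I$ we have $ij\notin E(X)$ (since $I$ is independent), which forces $M_{ij}=0$ by the defining property of a pseudo-adjacency matrix; and for $i=j$ we also have $ii\notin E(X)$ because $X$ is a simple graph with no loops, so $M_{ii}=0$ as well. Consequently every eigenvalue of $M[I]$ equals $0$.

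Next I would invoke Cauchy's interlacing theorem. Writing the eigenvalues of $M$ as $\lambda_1\ge\lambda_2\ge\cdots\ge\lambda_n$ and those of $M[I]$ as $\mu_1\ge\cdots\ge\mu_\alpha$, interlacing gives
$$
\lambda_i \;\ge\; \mu_i \;\ge\; \lambda_{n-\alpha+i}\qquad\text{for all } i=1,\ldots,\alpha.
$$
Since each $\mu_i=0$, the left-hand inequality yields $\lambda_1,\ldots,\lambda_\alpha\ge 0$, so $M$ has at least $\alpha$ non-negative eigenvalues, i.e.\ $n_{\ge 0}(M)\ge \alpha$. Symmetrically, the right-hand inequality yields $\lambda_{n-\alpha+1},\ldots,\lambda_n\le 0$, so $n_{\le 0}(M)\ge \alpha$. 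Combining these two bounds produces $\alpha(X)\le \min\{n_{\le 0}(M),n_{\ge 0}(M)\}$, as desired.

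There is no substantive obstacle in this argument; it is essentially a one-line consequence of interlacing once the structural observation about $M[I]$ is made. The only subtle point is that the definition of pseudo-adjacency matrix adopted in the excerpt forces $M_{ii}=0$ through the convention that $ii\notin E(X)$ in a simple graph; without this, one only obtains the weaker inertia-type bound involving the nullity of $M$ plus $\min\{n_{+}(M),n_{-}(M)\}$, and an additional argument would be needed. Given the definition stated here, however, the clean form in the theorem follows directly.
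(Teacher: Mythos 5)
Your proof is correct. The paper does not prove Theorem~\ref{Cv} at all---it simply cites it (see \cite[Corollary 2.5]{HKP20})---and your argument is the standard proof of the inertia bound: restricting $M$ to a maximum independent set $I$ gives the zero matrix (including the diagonal, since $X$ is simple so $ii\notin E(X)$ and the definition of pseudo-adjacency matrix forces $M_{ii}=0$), and Cauchy interlacing with all $\mu_i=0$ yields $\lambda_1,\ldots,\lambda_{\alpha}\ge 0$ and $\lambda_{n-\alpha+1},\ldots,\lambda_n\le 0$, hence $n_{\ge 0}(M)\ge\alpha(X)$ and $n_{\le 0}(M)\ge\alpha(X)$ simultaneously. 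Note also that this is consistent with how the theorem is applied in the paper: the weighted Cayley matrix built in the proof of Theorem~\ref{mainthm2} has zero diagonal because the weight function is supported on the connection set, which excludes $\mathbf{0}$.
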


We say $X$ is {\em vertex-transitive} if the automorphism group of $X$ acts transitively on the vertex set of $X$. In particular, Cayley graphs are vertex-transitive. The following theorem, known as the clique-coclique bound, is due to Delsarte \cite{D73}.

\begin{thm}[Clique-coclique bound]
If $X$ is a vertex-transitive graph with $n$ vertices, then $\omega(X)\alpha(X) \leq n$.
\end{thm}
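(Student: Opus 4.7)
The plan is to follow the standard combinatorial averaging argument, exploiting transitivity of the automorphism group together with the elementary observation that a clique and an independent set in any graph share at most one vertex.

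First I would fix a maximum clique $C$ (so $|C|=\omega(X)$) and a maximum independent set $I$ (so $|I|=\alpha(X)$) in $X$, and let $\Gamma$ denote the automorphism group of $X$. The key ingredient is that $\Gamma$ acts transitively on the vertex set $V$, so for any two vertices $u,v \in V$ the set $\{\sigma \in \Gamma : \sigma(v)=u\}$ is a coset of the stabilizer of $v$ and therefore has size exactly $|\Gamma|/n$ by the orbit-stabilizer theorem.

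Next I would double-count the quantity
\[
S \;=\; \sum_{\sigma \in \Gamma} |\sigma(C) \cap I|.
\]
On one hand, for every $\sigma \in \Gamma$ the image $\sigma(C)$ is again a clique in $X$ (since $\sigma$ is a graph automorphism), and any clique meets any independent set in at most one vertex (two distinct vertices in a clique are adjacent, so they cannot both lie in an independent set); hence $S \le |\Gamma|$. On the other hand, swapping the order of summation,
\[
S \;=\; \sum_{v \in C} \sum_{u \in I} \bigl|\{\sigma \in \Gamma : \sigma(v)=u\}\bigr| \;=\; |C|\,|I| \cdot \frac{|\Gamma|}{n},
\]
where the last equality uses the transitivity computation above. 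Comparing the two expressions for $S$ and cancelling $|\Gamma|$ yields $\omega(X)\alpha(X) = |C|\,|I| \le n$, which is the desired inequality.

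There is essentially no obstacle here; the only subtlety to flag is that the argument uses only vertex-transitivity, not any stronger symmetry (such as being a Cayley graph or being arc-transitive), and the orbit-stabilizer step is precisely where transitivity is needed to guarantee that every ordered pair $(v,u)$ contributes the same number $|\Gamma|/n$ to $S$. The bound is sharp for many natural vertex-transitive graphs, such as Kneser graphs achieving equality via the Erd\H{o}s--Ko--Rado theorem, which is a useful sanity check but not needed for the proof itself.
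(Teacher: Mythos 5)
Your argument is correct and complete: the orbit--stabilizer computation giving $|\{\sigma\in\Gamma:\sigma(v)=u\}|=|\Gamma|/n$ for every ordered pair $(u,v)$ is exactly where vertex-transitivity enters, the double count of $S=\sum_{\sigma\in\Gamma}|\sigma(C)\cap I|$ is carried out correctly, and the bound $|\sigma(C)\cap I|\le 1$ is justified properly (a clique and an independent set share at most one vertex). Note, however, that the paper does not prove this statement at all -- it simply quotes the clique--coclique bound with a citation to Delsarte, whose original derivation sits inside the linear-programming/association-scheme framework (and there are also spectral proofs via ratio-type bounds for vertex-transitive graphs). What your group-averaging proof buys is a short, fully elementary and self-contained argument that uses nothing beyond transitivity of the automorphism group, which is well suited to the way the bound is actually used in this paper (Propositions~\ref{propa} and~\ref{propb}, where explicit cliques in Cayley graphs are exhibited); what the LP/spectral routes buy in general is additional information, such as weighted versions and characterizations of equality, which are not needed here.
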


Next, we restate our problem of estimating $D_G(J,N)$ in the setting of graph theory and apply the clique-coclique bound to provide some improved bounds on Theorem~\ref{thmG} when $J$ has an additional additive structure.

\begin{lem}\label{Cayley}
Let $G$ be a finite abelian group and $J \subset G$ such that $0 \in J$. Consider the Cayley graph $X=\operatorname{Cay}(G^N, J^N \cup (-J)^N \setminus \{\mathbf{0}\})$. Then $\alpha(X)=D_G(J,N)$.
\end{lem}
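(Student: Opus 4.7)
The plan is to unpack both definitions and verify that independent sets in $X$ are exactly the sets $A \subset G^N$ enumerated by $D_G(J,N)$; then the two maxima coincide. First I would check that $X$ is a legitimate undirected simple graph: the connection set $S := J^N \cup (-J)^N \setminus \{\mathbf{0}\}$ is symmetric under negation ($-S = S$) and excludes $\mathbf{0}$, so the Cayley graph is well-defined with no loops.

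Next I would establish the two directions. Given an independent set $A$ in $X$, for any distinct $a,b \in A$ we have $a - b \notin S$, hence in particular $a - b \notin J^N$. Combined with the fact that $\mathbf{0} \in J^N$ (since $0 \in J$) and $\mathbf{0} \in A - A$, this yields $(A - A) \cap J^N = \{\mathbf{0}\}$, so $A$ is counted by $D_G(J,N)$. Conversely, suppose $A \subset G^N$ satisfies $(A-A) \cap J^N = \{\mathbf{0}\}$. The key observation is that $A - A$ is symmetric under negation: if $d = a - b \in A - A$ then $-d = b - a \in A - A$. Therefore $(A-A)\cap(-J)^N = \{\mathbf{0}\}$ as well, because any nontrivial element of $(A-A) \cap (-J)^N$ would negate to a nontrivial element of $(A-A) \cap J^N$. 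Combining these, no nonzero element of $A - A$ lies in $S$, so $A$ is independent in $X$.

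Taking maxima on both sides gives $\alpha(X) = D_G(J,N)$. There is no real obstacle here; the only subtlety is recognizing that the one-sided condition $(A-A) \cap J^N = \{\mathbf{0}\}$ from the definition of $D_G(J,N)$ is equivalent to the symmetrized condition $(A-A) \cap (J^N \cup (-J)^N) = \{\mathbf{0}\}$ built into the Cayley graph, and this equivalence follows immediately from $A - A = -(A - A)$.
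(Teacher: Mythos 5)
Your proof is correct and takes essentially the same route as the paper, which simply asserts in one line that $(A-A)\cap J^N=\{\mathbf{0}\}$ is equivalent to $A$ being independent in $X$; your write-up just makes explicit the symmetry observation $A-A=-(A-A)$ that justifies passing to the symmetrized connection set $J^N\cup(-J)^N\setminus\{\mathbf{0}\}$.
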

\begin{proof}
Note that $A \subset G^N$ such that $(A-A) \cap J^N=\{\mathbf{0}\}$ is equivalent to asserting that $A$ is an independent set in the graph $X$.
\end{proof}

The bound in the next proposition is poor when $G$ is fixed and $N \to \infty$. However, when $N$ is relatively small compared to $|G|$, the upper bound $D_G(\{0,a\},N)\leq |G|^N/(N+1)$ outweighs Theorem~\ref{thmG}.

\begin{prop}\label{propa}
Let $G$ be a finite abelian group. Suppose that there exists $g \in G$, and a positive integer $m$ such that the order of $g$ is at least $m+1$ and $\{0,g,2g \ldots, mg\} \subset J$. Then
$$D_G(J,N) \leq \frac{|G|^N}{mN+1}.$$
\end{prop}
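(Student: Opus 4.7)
The plan is to recast the problem graph-theoretically and then apply the clique-coclique bound. By Lemma~\ref{Cayley}, we have $D_G(J,N) = \alpha(X)$ where $X = \operatorname{Cay}(G^N, J^N \cup (-J)^N \setminus \{\mathbf{0}\})$. As a Cayley graph on the finite abelian group $G^N$, $X$ is vertex-transitive with $|G|^N$ vertices, so Delsarte's clique-coclique bound gives $\alpha(X)\omega(X) \leq |G|^N$. It therefore suffices to exhibit a clique of size $mN+1$ in $X$, which will yield the desired inequality $D_G(J,N) \leq |G|^N/(mN+1)$.

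To build such a clique, I would use a "staircase" through $G^N$ that fills the coordinates one at a time using the arithmetic progression $\{0,g,2g,\ldots,mg\} \subset J$ as step sizes. Concretely, for each integer $k$ with $0 \leq k \leq mN$, write $k = qm+r$ with $0 \leq r < m$ (and set $(q,r)=(N,0)$ when $k=mN$), and define $v_k \in G^N$ to be the vector whose first $q$ coordinates equal $mg$, whose $(q+1)$-th coordinate (when $q<N$) equals $rg$, and whose remaining coordinates are $0$. Since the order of $g$ is at least $m+1$, the elements $0,g,2g,\ldots,mg$ are pairwise distinct, so the vectors $v_0,v_1,\ldots,v_{mN}$ are $mN+1$ distinct elements of $G^N$.

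The remaining step is to check that $C=\{v_0,\ldots,v_{mN}\}$ is a clique in $X$. For $j<k$ with $j=q_1m+r_1$ and $k=q_2m+r_2$, a short case analysis (either $q_1=q_2$, where $v_k-v_j$ has a single nonzero coordinate $(r_2-r_1)g$; or $q_1<q_2$, where the nonzero coordinates of $v_k-v_j$ are $(m-r_1)g$, some copies of $mg$, and $r_2g$) shows that every coordinate of $v_k-v_j$ lies in $\{0,g,\ldots,mg\}\subset J$. Hence $v_k-v_j \in J^N\setminus\{\mathbf{0}\}$, so $v_j$ and $v_k$ are adjacent in $X$, confirming that $\omega(X) \geq mN+1$. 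No step is genuinely difficult; the main thing to be careful about is the indexing bookkeeping for the $v_k$'s, especially at the "corners" $k=qm$ where one coordinate has just been completed and the next is about to start.
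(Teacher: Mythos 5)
Your proposal is correct and follows essentially the same route as the paper: reduce to the Cayley graph via Lemma~\ref{Cayley}, apply the clique-coclique bound, and exhibit an explicit staircase clique of size $mN+1$ whose pairwise differences have all coordinates in $\{0,g,\ldots,mg\}\subset J$. The only difference is cosmetic---you fill each coordinate up to $mg$ before moving to the next, while the paper sweeps all coordinates from $ig$ to $(i+1)g$ level by level---and both verifications work for the same reason.
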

\begin{proof}
By the clique-coclique bound, it suffices to find a clique with size $mN+1$ in the corresponding Cayley graph $X$ as in Lemma~\ref{Cayley}. For $1 \leq j \leq N$, let $e_j$ denotes the $j$-th unit vector. Let $T=\big\{g\sum_{j=1}^k e_j: 1 \leq k \leq N\big\}$. It is easy to verify that the following subset forms a clique:
\[
\{0\} \cup  \bigcup_{i=0}^{m-1} \bigg(T+ig\sum_{j=1}^N e_j\bigg).\qedhere
\]
\end{proof}

The next proposition states that if $J$ contains a symmetric arithmetic progression, then Theorem~\ref{thmG} can be improved significantly.

\begin{prop}\label{propb}
Let $G$ be a finite abelian group. Suppose that there exist $g \in G$, and a positive integer $m$ such that the order of $g$ is at least $m+1$ and $\{-mg, -(m-1)g, \ldots, mg\} \subset J$. Then
$$D_G(J,N) \leq \bigg(\frac{|G|}{m+1}\bigg)^N.$$
In particular, if $0 \in J \subset G$ such that $J \cap (-J)\neq \{0\}$, then $D_G(J,N)\leq (|G|/2)^N$.
\end{prop}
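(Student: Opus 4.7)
The plan is to mimic the clique construction of Proposition~\ref{propa}, but exploiting the \emph{symmetry} of the arithmetic progression inside $J$ in order to build a substantially larger clique, namely a clique in the form of a full Cartesian product. Combined with the clique-coclique bound of Delsarte (applicable via Lemma~\ref{Cayley}, since $X=\operatorname{Cay}(G^N, J^N \cup (-J)^N \setminus \{\mathbf{0}\})$ is a Cayley graph and hence vertex-transitive), this will immediately give the claimed bound.

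Concretely, I would set
\[
C = \{0,g,2g,\ldots,mg\}^N \subseteq G^N.
\]
The assumption that $g$ has order at least $m+1$ guarantees that the $m+1$ scalars $0,g,2g,\ldots,mg$ are pairwise distinct in $G$, so $|C|=(m+1)^N$. For any two distinct tuples $\mathbf{x}=(x_1 g,\ldots,x_N g)$ and $\mathbf{y}=(y_1 g,\ldots,y_N g)$ in $C$ with $x_i,y_i \in \{0,1,\ldots,m\}$, each coordinate of the difference satisfies
\[
(x_i-y_i)g \in \{-mg,-(m-1)g,\ldots,mg\} \subseteq J,
\]
so $\mathbf{x}-\mathbf{y} \in J^N \setminus \{\mathbf{0}\}$, which is a subset of the connection set of $X$. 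Thus $C$ is a clique of size $(m+1)^N$ in $X$. The clique-coclique bound then yields
\[
D_G(J,N)=\alpha(X) \leq \frac{|V(X)|}{\omega(X)} \leq \frac{|G|^N}{(m+1)^N},
\]
which is the desired inequality.

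For the second (``in particular'') assertion, I would argue as follows: if $J\cap(-J)\neq\{0\}$, pick any nonzero $g\in J\cap(-J)$. Then $\{-g,0,g\}\subseteq J$, and since $g\neq 0$ its order is at least $2$, so the hypothesis of the first part is met with $m=1$. Applying the first part gives $D_G(J,N)\leq (|G|/2)^N$, as claimed.

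There is really no conceptual obstacle here: the content of the proof is entirely in the (very short) verification that the Cartesian product $\{0,g,\ldots,mg\}^N$ is a clique, which is exactly where the hypothesis $\{-mg,\ldots,mg\}\subseteq J$ (as opposed to merely $\{0,g,\ldots,mg\}\subseteq J$) is used. The only minor point worth double-checking is that $|C|=(m+1)^N$, which is why the order condition on $g$ cannot be dropped. Everything else is bookkeeping with the clique-coclique bound.
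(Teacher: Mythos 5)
Your proof is correct and matches the paper's argument: both apply the clique-coclique bound to $X=\operatorname{Cay}(G^N, J^N\cup(-J)^N\setminus\{\mathbf{0}\})$ with the clique $C=\{0,g,2g,\ldots,mg\}^N$ of size $(m+1)^N$, whose validity rests exactly on the symmetric progression $\{-mg,\ldots,mg\}\subset J$ and the order condition on $g$. Your explicit handling of the ``in particular'' case (taking $m=1$ with a nonzero $g\in J\cap(-J)$) is just the spelled-out version of what the paper leaves implicit.
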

\begin{proof}
By the clique-coclique bound, it suffices to find a clique with size $(m+1)^N$ in the corresponding Cayley graph $X$ as in Lemma~\ref{Cayley}. This is easy: $C=\{0,g,2g, \ldots, mg\}^N$ obviously forms a clique.
\end{proof}

The following proposition is useful for providing lower bounds on $D_G(J,N)$ when $J$ is contained in a subgroup of $G$.

\begin{prop}\label{lb}
Let $G$ be a finite abelian group and $J \subset G$ such that $0 \in J$. If $H$ is the subgroup generated by $J$, then $D_G(J,N)= D_H(J,N)[G:H]^N$. In particular, if $q$ is a power of $2$, then $D_{\F_q}(\{0,1\},N)=(q/2)^N.$
\end{prop}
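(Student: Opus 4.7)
The plan is to prove the identity $D_G(J,N)=D_H(J,N)\,[G:H]^N$ by matching upper and lower bounds, each of which exploits the fact that $J\subset H$, hence $J^N\subset H^N$, so the forbidden-difference condition is really a statement inside cosets of $H^N$ in $G^N$.

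For the lower bound, I would fix a transversal $R=\{g_1,\ldots,g_k\}\subset G$ of $H$ in $G$ (where $k=[G:H]$), and let $A_0\subset H^N$ be an extremal set witnessing $D_H(J,N)$. Setting $A:=A_0+R^N\subset G^N$, I would first observe that $R^N$ is a complete set of coset representatives of $H^N$ in $G^N$ (two distinct elements of $R^N$ differ coordinate-wise in $H$ only if they agree, by choice of $R$), so the elements of $A$ are distinct and $|A|=|A_0|\cdot k^N$. To verify $(A-A)\cap J^N=\{\mathbf{0}\}$, I would split a difference $(a+r)-(a'+r')$ by whether $r=r'$ or not: if $r\neq r'$ then $r-r'\notin H^N$, and since $a-a'\in H^N$ the whole difference falls outside $H^N\supset J^N$; if $r=r'$ the difference lies in $A_0-A_0$, which by assumption meets $J^N$ only at $\mathbf{0}$.

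For the upper bound, given any $A\subset G^N$ with $(A-A)\cap J^N=\{\mathbf{0}\}$, I would partition $A$ according to the $[G:H]^N$ cosets of $H^N$ in $G^N$. Fixing a coset $H^N+\vec{g}$ and translating, the intersection $A\cap(H^N+\vec{g})$ is in bijection with a subset $A_{\vec{g}}\subset H^N$ whose difference set still avoids $J^N\setminus\{\mathbf{0}\}$, so $|A_{\vec{g}}|\le D_H(J,N)$. Summing across all cosets gives $|A|\le D_H(J,N)\cdot[G:H]^N$, matching the construction.

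For the corollary when $q=2^m$, I would note that $H:=\{0,1\}$ is a subgroup of $(\F_q,+)$ since the characteristic is $2$, and $H$ is generated by $J=\{0,1\}$. The remaining point is that $D_H(J,N)=1$: any $A\subset\{0,1\}^N$ satisfies $A-A\subset\{0,1\}^N=J^N$ (again using $-1=1$), so forcing $(A-A)\cap J^N=\{\mathbf{0}\}$ collapses $A$ to a single point. Plugging $D_H(J,N)=1$ and $[G:H]=q/2$ into the main identity yields $D_{\F_q}(\{0,1\},N)=(q/2)^N$. There is no serious obstacle; the only place that requires care is checking in the lower bound that distinct pairs $(a,r)\in A_0\times R^N$ yield distinct elements of $G^N$ and that cross-coset differences genuinely escape $H^N$, which I would handle by the coset-representative observation above.
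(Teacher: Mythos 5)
Your proof is correct, and it takes a somewhat different (more elementary) route than the paper. The paper works through the Cayley graph $X=\operatorname{Cay}(G^N, J^N\cup(-J)^N\setminus\{\mathbf{0}\})$ via Lemma~\ref{Cayley}: because $0\in J$ and $J$ generates $H$, single-coordinate moves by elements of $J\cup(-J)$ connect any two vertices in the same coset of $H^N$, so the connected components of $X$ are exactly the $[G:H]^N$ cosets, each isomorphic to $\operatorname{Cay}(H^N, J^N\cup(-J)^N\setminus\{\mathbf{0}\})$, and the independence number multiplies across components. You carry out the same coset decomposition of $G^N$ by $H^N$, but directly: the upper bound by partitioning $A$ into cosets and translating each piece into $H^N$ (difference sets are translation-invariant), the lower bound by the explicit construction $A_0+R^N$ with a transversal $R$, checking distinctness and that cross-coset differences leave $H^N\supset J^N$. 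Two small dividends of your version: it never uses that $J$ generates $H$, only $J\subset H$, so it proves the identity for every subgroup $H$ containing $J$ (the paper's connectivity step genuinely needs the generation hypothesis, since for larger $H$ the components are cosets of $\langle J\rangle^N$ rather than of $H^N$); and you make explicit the base case $D_{\F_2}(\{0,1\},N)=1$ in the characteristic-two corollary, which the paper leaves implicit. What the paper's graph-theoretic phrasing buys is uniformity, since Lemma~\ref{Cayley} is the same reformulation used throughout the rest of the argument; your proof is self-contained and marginally more general.
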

\begin{proof}
Consider the Cayley graph $X=\operatorname{Cay}(G^N, J^N \cup (-J)^N \setminus \{\mathbf{0}\})$. By Lemma~\ref{Cayley}, $\alpha(X)=D_G(J,N)$. 

Let $\vec{u},\vec{v}$ be two distinct vertices of $X$. Since we are working on a Cayley graph, $\vec{u},\vec{v}$ are connected if and only if $\vec{u}-\vec{v}$ can be expressed as a sum of elements in the connection set $J^N \cup (-J)^N \setminus \{\mathbf{0}\}$. Since $J$ generates $H$ and $0 \in J$, an element $x \in G^n$ can be expressed as a sum of elements $J^N \cup (-J)^N \setminus \{\mathbf{0}\}$ if and only if $x\in H^n$. Thus, $\vec{u},\vec{v}$ of $X$ are connected if and only if
$\vec{u}-\vec{v} \in H^N$. In particular, two vertices $\vec{u},\vec{v}$ are connected if and only if they are in the same coset of $H^N$. 

By the above argument, $X$ has $[G:H]^N$ connected components, where each component is indexed by a coset of $H^N$ and all components are isomorphic to the Cayley graph $Y=\operatorname{Cay}(H^N, J^N \cup (-J)^N \setminus \{\mathbf{0}\})$. It follows from Lemma~\ref{Cayley} that
$D_G(J,N)=\alpha(X)=[G:H]^N \alpha(Y)=D_H(J,N)[G:H]^N$.
\end{proof}

\subsection{Tools from probability}
Recall that $\Phi(x)$ is the cumulative distribution function of the standard normal distribution, that is,
$$\Phi (x)={\frac {1}{\sqrt {2\pi }}}\int _{-\infty }^{x}e^{-t^{2}/2}\,dt.
$$

Let $X_{1}, X_{2}, \ldots$ be independent and identically distributed (i.i.d.) copies of a random variable $X$ with mean $\mu$ and variance $\sigma^{2}>0$. We use $S_N$ to denote the $N$-th partial sum, that is, $S_N=X_1+X_2+\cdots+X_N$. Let $F_N$ be the cumulative distribution function of $(S_N-N\mu)/\sigma \sqrt{N}$. The central limit theorem states that $F_N$ converges to the standard normal distribution in distribution, that is, $\lim_{N \to \infty} F_N(x)=\Phi(x)$ for all $x \in \R$. We refer to \cite{P75} for general discussion on the central limit theorem and its variants. The central limit theorem does not indicate the rate of convergence. For our purposes, we need a more precise quantitative estimate on the error term. The following classical theorem is due to Berry and Esseen; see for example \cite[Chapter V]{P75}.

\begin{thm}[Berry–Esseen theorem] \label{BE}
Let $X_{1}, X_{2}, \ldots$ be i.i.d. copies of a random variable $X$ with mean $\mu$, variance $\sigma^{2}>0$, and finite third moment.
Let $F_N$ be the cumulative distribution function of $(S_N-N\mu)/\sigma \sqrt{N}$. Then
$$
 \sup _{x\in \mathbb {R} }\left|F_{N}(x)-\Phi (x)\right|= O\bigg(\frac{1}{\sqrt{N}}\bigg).
$$
\end{thm}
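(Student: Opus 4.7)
\emph{Proof plan.} The plan is to follow the classical argument of Berry and Esseen via characteristic functions. First I would reduce to the standardized case $\mu=0$, $\sigma=1$ by replacing each $X_i$ with $(X_i-\mu)/\sigma$, and write $\rho_3=\mathbb{E}[|X|^3]<\infty$ for the absolute third moment. Let $\phi(t)=\mathbb{E}[e^{itX}]$ be the characteristic function of $X$. Since the $X_i$ are i.i.d.\ the characteristic function of $(S_N-N\mu)/(\sigma\sqrt{N})$ is $\phi_N(t)=\phi(t/\sqrt{N})^N$, while $\Phi$ has characteristic function $\psi(t)=e^{-t^2/2}$.

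The central device is \emph{Esseen's smoothing inequality}: for any $T>0$,
$$
\sup_{x\in\R}|F_N(x)-\Phi(x)| \;\le\; \frac{1}{\pi}\int_{-T}^{T}\frac{|\phi_N(t)-\psi(t)|}{|t|}\,\ud t \;+\; \frac{C}{T},
$$
with an absolute constant $C$ (coming from $\|\Phi'\|_\infty=1/\sqrt{2\pi}$). This reduces the task to a pointwise Fourier-side estimate on a well-chosen interval; the inequality itself is proved by convolving $F_N$ and $\Phi$ with a Fej\'er-type kernel whose Fourier transform is compactly supported and then carefully tracking the smoothing error via Parseval.

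Next I would Taylor expand $\phi$ at the origin. The elementary bound $|e^{iu}-1-iu+u^2/2|\le |u|^3/6$ gives
$$
\phi(s) \;=\; 1 - \tfrac{1}{2}s^2 + \theta(s)\,\rho_3 |s|^3, \qquad |\theta(s)|\le \tfrac{1}{6}.
$$
Setting $s=t/\sqrt{N}$ together with the telescoping identity $a^N-b^N=(a-b)\sum_{k=0}^{N-1}a^{N-1-k}b^k$, and using $|\phi(t/\sqrt{N})|\le e^{-t^2/(3N)}$ on $|t|\le c\sqrt{N}/\rho_3$ (from the real part of the expansion), one obtains
$$
|\phi_N(t)-e^{-t^2/2}| \;\lesssim\; \frac{\rho_3 |t|^3}{\sqrt{N}}\,e^{-t^2/4}
$$
on the interval $|t|\le T:=c\sqrt{N}/\rho_3$. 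Substituting into the smoothing inequality, the integral piece becomes $O(\rho_3/\sqrt{N})$ since $\int_\R t^2 e^{-t^2/4}\,\ud t = O(1)$, and the tail piece is $C/T=O(\rho_3/\sqrt{N})$, yielding the desired $O(1/\sqrt{N})$ bound (with the implicit constant depending on the distribution of $X$ through $\rho_3$).

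The main obstacle is Esseen's smoothing inequality itself: its proof requires a careful Fourier-analytic argument with an auxiliary kernel, and although standard, it is the one step that is not a computation. The Taylor-expansion step is conceptually routine but must be handled with care near the edge $|t|\approx T$, because the exponent $N$ in $\phi(t/\sqrt{N})^N$ amplifies any pointwise error not under logarithmic control; restricting to $|t|\le c\sqrt{N}/\rho_3$ is exactly what keeps $\log \phi(t/\sqrt{N})$ within its quadratic Taylor polynomial up to the manageable cubic remainder used above.
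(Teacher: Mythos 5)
The paper does not prove this statement: it is quoted as a classical theorem, with a pointer to Petrov's book \cite[Chapter V]{P75}, so there is no internal proof to compare against. Your outline is precisely the standard characteristic-function argument that the cited reference carries out: Esseen's smoothing inequality, a third-order Taylor expansion of $\phi$ at the origin, the telescoping bound for $\phi(t/\sqrt{N})^N-e^{-t^2/2}$ on $|t|\le c\sqrt{N}/\rho_3$, and the choice $T\asymp\sqrt{N}/\rho_3$. As a plan it is sound, with two caveats you should make explicit if you flesh it out: the smoothing inequality is the genuinely nontrivial step (you correctly flag it, but it must actually be proved or cited), and after standardization one should record that $\rho_3=\mathbb{E}|X|^3\ge\sigma^3=1$, which is what makes the interval $|t|\le c\sqrt{N}/\rho_3$ and the bound $|\phi(t/\sqrt{N})|\le e^{-t^2/(3N)}$ there legitimate with an absolute constant $c$. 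Note also that the resulting constant in $O(1/\sqrt{N})$ depends on the law of $X$ only through $\rho_3$; this is exactly the uniformity the paper needs, since in its application the group $G$ (hence the distribution) is fixed while $N\to\infty$.
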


We also need the following local limit theorem (for discrete random variables) due to Gnedenko \cite{G48}; see also \cite[Theorem 7]{Tao15}. We can also use a similar result by Prohorov \cite{P54} for uniformly bounded independent random variables. We refer to \cite[Chapter VII]{P75} for a general discussion of local limit theorems. As remarked in \cite[Theorem 7]{Tao15}, Berry–Esseen theorem would give an error term $O(1/\sqrt{N})$ instead of $o(1/\sqrt{N})$.

\begin{thm} [Gnedenko] \label{local}
Let $X_{1}, X_{2}, \ldots$ be i.i.d. copies of an integer-valued random variable $X$ with mean $\mu$ and variance $\sigma^{2}$. Suppose furthermore that there is no infinite subprogression $a+q \mathbf{Z}$ of $\mathbf{Z}$ with $q>1$ for which $X$ takes values almost surely in $a+q \mathbf{Z}$. Then one has
$$
\Pr\left(S_{N}=m\right)=\frac{1}{\sqrt{2 \pi N} \sigma} e^{-(m-N \mu)^{2} /(2 N \sigma^{2})} +o\bigg(\frac{1}{\sqrt{N}}\bigg)
$$
for all $N \geq 1$ and all integers $m$, where the error term $o(1/\sqrt{N})$ is uniform in $m$.
\end{thm}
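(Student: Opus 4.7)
The plan is to use characteristic functions and Fourier inversion, which is the classical route to local limit theorems. Let $\phi(t) = \mathbb{E}[e^{itX}]$ denote the characteristic function of $X$. Since $X$ is integer-valued, $\phi$ is $2\pi$-periodic and Fourier inversion on $\Z$ yields
$$\Pr(S_N = m) = \frac{1}{2\pi}\int_{-\pi}^{\pi} \phi(t)^N e^{-itm}\, dt.$$
The target Gaussian density admits a parallel Fourier representation
$$\frac{1}{\sqrt{2\pi N}\,\sigma} e^{-(m-N\mu)^2/(2N\sigma^2)} = \frac{1}{2\pi}\int_{-\infty}^{\infty} e^{iN\mu t - N\sigma^2 t^2/2}\,e^{-itm}\,dt,$$
so the whole task reduces to comparing these two integrands uniformly in $m$.

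First I would split the integration range into an inner window $|t|\leq\delta$ and an outer region $\delta\leq|t|\leq\pi$, for some small $\delta>0$ to be chosen. Inside the window, the finite-variance Taylor expansion $\log\phi(t) = i\mu t - \sigma^2 t^2/2 + o(t^2)$ gives $\phi(t)^N = e^{iN\mu t - N\sigma^2 t^2/2}(1+o(1))$ after the rescaling $s = \sigma\sqrt{N}\,t$. Combined with dominated convergence, this recovers the main Gaussian term up to an $o(1/\sqrt{N})$ discrepancy. The non-lattice hypothesis enters in the outer region: because $X$ is not almost surely supported on any proper progression $a+q\Z$ with $q>1$, one has $|\phi(t)|<1$ for every $t\in[-\pi,\pi]\setminus\{0\}$; by continuity and compactness, $\rho := \sup_{\delta\leq|t|\leq\pi} |\phi(t)| < 1$, so the outer contribution is bounded by $\rho^N$, which is exponentially small and absorbed into the error term. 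The corresponding Gaussian tail for $|t|>\delta$ is likewise negligible.

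The main obstacle I expect is the uniformity in $m$. Since $|e^{-itm}|=1$, the modulus estimates on both the inner and outer pieces are automatically independent of $m$, so qualitative uniformity is essentially free; the delicate point is handling the Taylor remainder with only a second moment hypothesis. One must let $\delta = \delta(N)\to 0$ slowly enough that $\log\phi(t) - i\mu t + \sigma^2 t^2/2 = o(t^2)$ is uniformly controlled on $[-\delta,\delta]$, yet fast enough that the rescaled window $[-\delta\sigma\sqrt{N},\,\delta\sigma\sqrt{N}]$ still exhausts $\R$ in the limit. Balancing these two requirements is the standard subtlety in Gnedenko's argument, and is where the care in the proof would be concentrated; had we assumed a finite third moment, as in the Berry--Esseen setting of Theorem~\ref{BE}, this step would be immediate with an explicit $O(1/\sqrt{N})$ rate rather than $o(1/\sqrt{N})$.
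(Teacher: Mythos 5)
This theorem is not proved in the paper at all: it is quoted as a classical result of Gnedenko, with the paper pointing to \cite{G48} and to \cite[Theorem 7]{Tao15} for a proof. So there is no internal argument to compare against; what you have written is a sketch of the standard Fourier-analytic proof (inversion of $\Pr(S_N=m)=\frac{1}{2\pi}\int_{-\pi}^{\pi}\phi(t)^N e^{-itm}\,dt$ against the Gaussian's Fourier representation, with uniformity in $m$ coming for free from $|e^{-itm}|=1$), which is exactly the route taken in the cited sources. Your identification of where the aperiodicity hypothesis enters (it forces $|\phi(t)|<1$ for $0<|t|\le\pi$) is also correct.

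The one genuine gap is in your two-region split, and you have half-noticed it yourself. If $\delta$ is a fixed constant, the inner approximation $\phi(t)^N=e^{iN\mu t-N\sigma^2t^2/2}(1+o(1))$ is false uniformly on $|t|\le\delta$: for $|t|$ of constant size the Taylor remainder $N\cdot o(t^2)$ is unbounded. If instead you let $\delta=\delta(N)\to0$, then your outer bound collapses, because $\rho=\sup_{\delta(N)\le|t|\le\pi}|\phi(t)|$ now depends on $N$ and the continuity-plus-compactness argument no longer yields a fixed $\rho<1$; $\rho^N$ need not be $o(1/\sqrt N)$. The standard repair is a three-range decomposition: (i) $|t|\le A/(\sigma\sqrt N)$, where after rescaling the integrand converges pointwise and dominated convergence applies; (ii) $A/(\sigma\sqrt N)\le|t|\le\epsilon_0$ with $\epsilon_0$ a fixed small constant, where one does not attempt a multiplicative approximation but instead bounds $|\phi(t)|^N$ and the Gaussian separately, using the second-moment estimate $|\phi(t)|^2=\mathbb{E}\,e^{it(X-X')}=1-\sigma^2t^2+o(t^2)$, hence $|\phi(t)|\le e^{-\sigma^2t^2/4}$ for $|t|\le\epsilon_0$, so this range contributes $O\bigl(\frac{1}{\sigma\sqrt N}\int_{|s|\ge A}e^{-s^2/4}\,ds\bigr)$, made small by taking $A$ large; and (iii) $\epsilon_0\le|t|\le\pi$, where the compactness argument legitimately gives a fixed $\rho<1$ and a contribution $O(\rho^N)=o(1/\sqrt N)$. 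Note also that the domination in range (i) is supplied by the same bound $e^{-s^2/4}$, so no extra hypothesis is needed. Finally, your closing remark that a finite third moment would give the conclusion ``immediately with an explicit $O(1/\sqrt N)$ rate'' is off target: an $O(1/\sqrt N)$ error is useless here since the main term is itself of size $\asymp 1/\sqrt N$; the paper's remark about Berry--Esseen concerns approximating the point probabilities by differences of the distribution function, not this step of the local argument.
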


It is known that the central binomial coefficients satisfy $\binom{2N}{N} \gg 4^N/\sqrt{N}$. The following corollary is a generalization and will be useful for providing lower bounds on $D_G(J,N)$.

\begin{cor}\label{central}
Let $m$ be a positive integer. Then as $N \to \infty$,
$$
\#\bigg\{(v_1,v_2,\ldots, v_N) \in \{0,1, \ldots, ,m\}^N: \sum_{j=1}^N v_j=\bigg\lfloor\frac{mN}{2} \bigg\rfloor\bigg\} \gg \frac{(m+1)^N}{m\sqrt{N}}.
$$
\end{cor}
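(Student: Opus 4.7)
The plan is to encode the count as a probability and apply the local limit theorem (Theorem~\ref{local}). Let $X_1, X_2, \ldots$ be i.i.d.\ random variables uniformly distributed on $\{0,1,\ldots,m\}$, so that $\Pr(X_j = k) = 1/(m+1)$ for each $0 \le k \le m$. Setting $S_N = X_1 + \cdots + X_N$ and $K = \lfloor mN/2 \rfloor$, we have
\[
\#\bigl\{(v_1,\ldots,v_N) \in \{0,\ldots,m\}^N : v_1+\cdots+v_N = K\bigr\} = (m+1)^N \Pr(S_N = K),
\]
so it suffices to show $\Pr(S_N = K) \gg 1/(m\sqrt{N})$.

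Next, I would verify the hypotheses of Theorem~\ref{local}. A direct computation yields $\mu := \mathbb{E}[X] = m/2$ and $\sigma^2 := \operatorname{Var}(X) = m(m+2)/12$, and clearly $X$ has a finite third moment. The non-lattice hypothesis holds because, for $m \ge 1$, the support of $X$ contains the consecutive integers $0$ and $1$, so it cannot be contained in any $a+q\Z$ with $q > 1$. Theorem~\ref{local} therefore gives
\[
\Pr(S_N = K) = \frac{1}{\sqrt{2\pi N}\,\sigma}\exp\!\left(-\frac{(K-N\mu)^2}{2N\sigma^2}\right) + o\!\left(\frac{1}{\sqrt{N}}\right).
\]
Since $N\mu = mN/2$ and $K = \lfloor mN/2 \rfloor$, we have $|K - N\mu| \le 1/2$, so the exponent is $O(1/N)$ and the exponential factor equals $1 + o(1)$. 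Using $\sigma = \Theta(m)$ (for $m \ge 1$ one has $\sigma \le (m+1)/\sqrt{12}$), this yields $\Pr(S_N = K) \gg 1/(m\sqrt{N})$, and multiplying by $(m+1)^N$ gives the claimed lower bound.

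There is no substantial obstacle here: this is a direct invocation of Gnedenko's theorem. The only points requiring a brief check are the non-lattice condition (trivial from $\{0,1\} \subset \operatorname{supp}(X)$) and the fact that the rounding in $K$ keeps us within $O(1)$ of the mean $N\mu$, which is deep inside the Gaussian window of width $\sigma\sqrt{N}$, so the exponential factor is harmless. An alternative combinatorial route would extract the coefficient of $t^K$ in $(1 + t + \cdots + t^m)^N$ by Stirling's formula, but the probabilistic argument above is cleaner and directly uses the tool just introduced.
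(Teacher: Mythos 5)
Your proposal is correct and follows essentially the same route as the paper: both define $X$ uniform on $\{0,1,\ldots,m\}$, compute $\mu=m/2$ and $\sigma^2=m(m+2)/12$, note that $\lfloor mN/2\rfloor$ is within $1/2$ of $N\mu$, and apply Theorem~\ref{local} to get $\Pr(S_N=\lfloor mN/2\rfloor)\gg 1/(m\sqrt{N})$. Your explicit check of the non-lattice hypothesis and the identity with $(m+1)^N\Pr(S_N=K)$ are details the paper leaves implicit, but the argument is the same.
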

\begin{proof}
Let $X$ be the uniform distribution supported on $\{0,1, \ldots, m\}$, that is, $\Pr(X=i)=\frac{1}{m+1}$ for all $i \in \{0,1, \ldots, m\}.$ It is clear that $\mu=\mathbb{E}(X)=\frac{m}{2}$. Note that
$$
\mathbb{E}(X^2)=\frac{1}{m+1} \sum_{i=0}^{m} i^2=\frac{1}{m+1} \frac{m(m+1)(2m+1)}{6}=\frac{m(2m+1)}{6}.
$$
It follows that
$$
\sigma^2=\operatorname{Var}(X)=\mathbb{E}(X^2)-\big(\mathbb{E}(X)\big)^2=\frac{m(2m+1)}{6}-\frac{m^2}{4}=\frac{m(m+2)}{12}.
$$
Let $X_1, X_2, \ldots$ be i.i.d. copies of $X$ and let $S_N$ be the $N$-th partial sum. Note that
$$
\bigg|\bigg\lfloor\frac{mN}{2} \bigg\rfloor -N\mu\bigg| \leq \frac{1}{2}.
$$
Thus, Theorem~\ref{local} implies that
$$
\Pr\left(S_{N}=\bigg\lfloor\frac{mN}{2} \bigg\rfloor\right)\geq \frac{1}{\sqrt{2 \pi N} \sigma} e^{-1 /( 8 N \sigma^{2})}+o\bigg(\frac{1}{\sqrt{N}}\bigg) \gg \frac{1}{m\sqrt{N}}e^{-1 /( 8 N \sigma^{2})} \gg \frac{1}{m\sqrt{N}}.
$$
The required estimate follows.
\end{proof}

\subsection{Cyclotomic polynomials}
The $n$-th {\em cyclotomic polynomial} is defined to be
monic polynomial whose roots are the $n$-th primitive roots of unity, that is,
$$
\Phi_n(t)=\prod_{\substack{1\leq j <n\\ \gcd(j,n)=1}} \bigg(t-e^{2\pi ij/n}\bigg)=\prod_{j \in \Z_n^*} \big(t-e_n(j)\big).
$$
Note that the degree of $\Phi_n(t)$ is given by Euler's totient function
$$
\phi(n)=\deg \Phi_n(t)=n \cdot\prod_{p \mid n} \bigg(1-\frac{1}{p}\bigg),
$$
where the product is over all prime factors of $n$. We also have the identity
$$
t^n-1=\prod_{d \mid n} \Phi_d(t).
$$
The coefficients of cyclotomic polynomials have been intensively studied; we refer to \cite{San22} for a recent survey. The following lemma lists some basic properties of cyclotomic polynomials.

\begin{lem}[{\cite[Section 1]{San22}}]\label{basic}
For every positive integer $n$, we have:
\begin{enumerate}
\item $\Phi_n(t)$ is an irreducible polynomial in $\Z[t]$.
\item $\Phi_n(0)=1$ if $n>1$.
\item Let $p$ be a prime. Then $\Phi_p(t)=(t^p-1)/(t-1)=t^{p-1}+t^{p-2}+\cdots+1$.
 \item If $n$ is an odd integer greater than one, then
$ \Phi_{2n}(t)=\Phi_{n}(-t).$
\item Let $r=\operatorname{rad}(n)$, the product of the primes dividing $n$. Then $\Phi_{n}(t)=\Phi_{r}(t^{n/r}).$
\end{enumerate}
\end{lem}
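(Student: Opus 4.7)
The plan is to derive all five items from the defining product
$$\Phi_n(t)=\prod_{\substack{1\le j\le n\\ \gcd(j,n)=1}}\bigl(t-e_n(j)\bigr)$$
together with the factorization identity $t^n-1=\prod_{d\mid n}\Phi_d(t)$.

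For item (1), I would adapt the classical Gauss--Kronecker argument. Let $\zeta=e_n(1)$ and let $f(t)\in\Z[t]$ be the minimal polynomial of $\zeta$ (it lies in $\Z[t]$ by Gauss's lemma). It suffices to show that $\zeta^p$ is a root of $f$ for every prime $p\nmid n$: iterating then implies that $\zeta^k$ is a root of $f$ for every $k$ coprime to $n$, so $\Phi_n\mid f$, and $f=\Phi_n$ follows from the irreducibility of $f$ and a degree comparison. If $\zeta^p$ were not a root of $f$, it would be a root of the cofactor $g(t)=(t^n-1)/f(t)\in\Z[t]$, so $\zeta$ would be a root of $g(t^p)$ and hence $f(t)\mid g(t^p)$ in $\Z[t]$. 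Reducing modulo $p$ and using the Frobenius identity $g(t^p)\equiv g(t)^p\pmod p$, one sees that $\bar f$ and $\bar g$ share a common irreducible factor in $\F_p[t]$, forcing $t^n-1$ to have a repeated root in $\overline{\F_p}$, a contradiction since $p\nmid n$.

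For item (2), evaluating $t^n-1=\prod_{d\mid n}\Phi_d(t)$ at $t=0$ gives $-1=\prod_{d\mid n}\Phi_d(0)$; combined with $\Phi_1(0)=-1$, this yields $\prod_{d\mid n,\, d>1}\Phi_d(0)=1$, and since each $\Phi_d(0)$ is a nonzero integer a short induction (or the direct observation that non-real primitive $d$-th roots pair as conjugates $\zeta,\bar\zeta$ with $|\zeta|^2=1$, combined with $\phi(d)$ being even for $d>2$ and the check $\Phi_2(0)=1$) gives $\Phi_d(0)=1$ for every $d>1$. Item (3) is immediate upon dividing $t^p-1=\Phi_1(t)\Phi_p(t)$ by $\Phi_1(t)=t-1$. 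For item (4), when $n$ is odd and $>1$ the map $\omega\mapsto -\omega$ is a bijection between primitive $n$-th and primitive $2n$-th roots of unity, so $\Phi_{2n}(t)=\prod_{\omega}(t+\omega)=(-1)^{\phi(n)}\prod_{\omega}(-t-\omega)=\Phi_n(-t)$, where the last step uses that $\phi(n)$ is even for odd $n\ge 3$. Item (5) follows from the bijection $\alpha\mapsto\alpha^{n/r}$ between primitive $n$-th and primitive $r$-th roots of unity (verified by a short prime-exponent calculation using $r=\operatorname{rad}(n)$): consequently $\Phi_r(t^{n/r})$ has exactly the primitive $n$-th roots of unity as its roots, and the matching degree count $\phi(r)\cdot (n/r)=\phi(n)$ shows the two monic polynomials agree.

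The main obstacle is item (1): the irreducibility of $\Phi_n$ over $\Z$ is by far the deepest of the five assertions, and the reduction-mod-$p$ argument requires Gauss's lemma together with the Frobenius identity in $\F_p[t]$. All other items reduce quickly to manipulations of the product formula and the factorization $t^n-1=\prod_{d\mid n}\Phi_d(t)$.
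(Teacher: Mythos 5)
Your proposal is correct, but it is worth noting that the paper does not prove this lemma at all: it is quoted as a list of standard facts from Section~1 of Sanna's survey \cite{San22}, so there is no in-paper argument to compare against. What you supply is a complete, self-contained derivation, and it follows the classical lines: item (1) via the Gauss--Kronecker reduction-mod-$p$ argument (Gauss's lemma, the Frobenius identity $g(t^p)\equiv g(t)^p \pmod p$, and separability of $t^n-1$ over $\F_p$ for $p\nmid n$), items (2)--(5) via the product formula over primitive roots and the factorization $t^n-1=\prod_{d\mid n}\Phi_d(t)$. Each step checks out: the conjugate-pairing computation for $\Phi_d(0)=1$ when $d>2$ together with $\Phi_2(0)=1$ handles (2) without any sign ambiguity; the bijections $\omega\mapsto-\omega$ (orders $n$ and $2n$ for odd $n$) and $\alpha\mapsto\alpha^{n/r}$, plus the degree identities $\phi(n)$ even for odd $n\ge 3$ and $\phi(r)\cdot(n/r)=\phi(n)$, correctly settle (4) and (5). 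The trade-off is simply length versus citation: your route makes the lemma self-contained at the cost of reproducing the (nontrivial, in the case of irreducibility) textbook proofs, whereas the paper treats these as background and spends no space on them; for the purposes of the paper, only items (2)--(5) and the factorization identity are actually used downstream (in Example~3.2 and Theorem~1.8), and none of those uses requires irreducibility, so the deepest part of your argument is, strictly speaking, optional for the applications at hand.
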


Cyclotomic polynomials are particularly helpful for our purposes in view of Theorem~\ref{mainthm2}. Note that if $h(t) \in \Z[t]$ and $h(t) \mid (t^n-1)$, then $h(t)$ can be written as the product of cyclotomic polynomials by the above lemma. We will combine Theorem~\ref{mainthm2} and Lemma~\ref{basic} to prove Theorem~\ref{mainthm3} in Section~\ref{thm23}.

\section{Proof of Theorem~\ref{mainthm2} and Theorem~\ref{mainthm3}}\label{thm23}
We begin the section by proving Theorem~\ref{mainthm2}.
\begin{proof}[Proof of Theorem~\ref{mainthm2}]
Without loss of generality, we can assume that $H=\Z_n$ and we identify $\Z_n$ with $\{0,1,\ldots, n-1\}$. Let $T=J^N$. Consider the Cayley graph $X=\operatorname{Cay}(G^N, S)$, where the connection set $S=T \cup (-T) \setminus \{\mathbf{0}\}$. By Lemma~\ref{Cayley},  $\alpha(X)=D_G(J,N)$.

By the assumption, the coefficient of $t^k$ in $h(t)$ is nonzero only if $k \in J$. For each $k \in J$, we define $f(k)$ to be the coefficient of $t^k$ in $h(t)$, and $f(-k)=f(k)$. In other words, $h$ can be regarded as the generating function of the sequence $\{f(k)\}_{k \in J}$, that is,
\begin{equation}\label{GF}
h(t)=\sum_{k \in J}f(k)t^k.
\end{equation}
This defines a function $f:G \to \Z$ supported on $J \cup (-J)$ with $f(0)=1$. Note that $f$ is well-defined since $J \cap (-J)=\{0\}$. We abuse the notation so that the real-valued function $f$ is defined on $G^N$ and is supported on $T \cup (-T)$:
$$
f(\vec{x})=
\begin{cases}
\prod_{j=1}^N f(x_j), & \quad \text{ if } \vec{x}=(x_1,\ldots, x_N) \in T \cup (-T)\\
0, & \quad \text{ otherwise}.
\end{cases}
$$Equip the Cayley graph $X$ with the weight function $f$, and let $M$ be the adjacency matrix of this weighted Cayley graph. In other words, we define $M$ to be a $|G|^N \times |G|^N$ matrix with rows and columns indexed by vectors in $G^N$ such that  $M_{\vec{u}, \vec{v}}=f(\vec{u}-\vec{v})$. Clearly, $M$ is a pseudo-adjacency matrix of $X$.

Next we apply Lemma~\ref{ev} to compute eigenvalues of $M$. Let $\chi$ be a character of $G^N$; then Lemma~\ref{directproduct} allows us to identify $\chi$ as a vector $(\chi_1,\ldots, \chi_N) \in \widehat{G}^N$, where
$$
\chi(\vec{v})=\chi_1(v_1)\chi_2(v_2)\ldots \chi_N(v_N), \quad \forall \vec{v}=(v_1,\ldots, v_N) \in G^N.
$$
Since $J \cap (-J)=\{0\}$, we have $T \cap (-T)=\{\mathbf{0}\}$. Therefore, the eigenvalue corresponding to the character $\chi$ can be expressed in terms of $h$ and $\chi$ in the following way:
\begin{align*}
\sum_{\vec{x} \in S}f(\vec{x})\chi(-\vec{x})
&=\sum_{\vec{x} \in T \setminus \{\mathbf{0}\}}f(\vec{x})\chi(-\vec{x})+\sum_{\vec{x} \in T \setminus \{\mathbf{0}\}}f(\vec{x})\chi(\vec{x})\\
&=\sum_{\vec{x} \in J^N \setminus \{\mathbf{0}\}}\prod_{j=1}^N f(x_j)\chi_j(-x_j)+ \sum_{\vec{x} \in J^N \setminus \{\mathbf{0}\}} \prod_{j=1}^N f(x_j)\chi_j(x_j)\\
&=-2+\sum_{\vec{x} \in J^N}\prod_{j=1}^N f(x_j)\chi_j(-x_j)+ \sum_{\vec{x} \in J^N} \prod_{j=1}^N f(x_j)\chi_j(x_j)\\
&=-2+\prod_{j=1}^N \bigg(\sum_{k \in J} f(k)\chi_j(-k)\bigg)+\prod_{j=1}^N \bigg(\sum_{k \in J} f(k)\chi_j(k)\bigg)\\
&=-2+\prod_{j=1}^N \bigg(\sum_{k \in J} f(k)\big(\chi_j(-1)\big)^k\bigg)+\prod_{j=1}^N \bigg(\sum_{k \in J} f(k)\big(\chi_j(1)\big)^k\bigg)\\
&=-2+\prod_{j=1}^N h\big(\chi_j(1)\big)+\prod_{j=1}^N \overline{h\big(\chi_j(1)\big)}\\
&=-2+2\Re \bigg(\prod_{j=1}^N h\big(\chi_j(1)\big) \bigg),
\end{align*}
where we use equation~\eqref{GF} and the fact that each $\chi_j$ is a character so that $\chi_j(k)=(\chi_j(1))^k$ and $\chi_j(-1)=\overline{\chi_j(1)}$.
Theorem~\ref{Cv} immediately implies that
$$
D_G(J,N)=\alpha(X)\leq \#\bigg\{\chi=(\chi_1,\ldots, \chi_N) \in \widehat{G}^N: \Re \bigg(\prod_{j=1}^N h\big(\chi_j(1)\big) \bigg) \geq 1\bigg\},
$$
where the right-hand side is the number of non-negative eigenvalues of $M$. Using Corollary~\ref{charextn}, we can rewrite the right-hand side of the above inequality as
\begin{equation}\label{key}
 D_{G}(J,N) \leq [G:H]^N \cdot \#\bigg\{\vec{v}=(v_1,v_2,\ldots, v_N) \in \Z_n^N: \Re \bigg(\prod_{j=1}^N h\big(e_n(v_j)\big) \bigg) \geq 1 \bigg\} .
\end{equation}
This proves the first claim of the theorem.

Next, we assume that $h(t) \mid (t^n-1)$. Then $h(t)$ has $\deg(h)$ distinct roots, and each root of $h(t)$ is an $n$-th root of unity, that is, each root is of the form $e_n(k)$ for some $k \in \Z_n$. Therefore,
$$
\#\bigg\{\vec{v}=(v_1,v_2,\ldots, v_N) \in \Z_n^N: \Re \bigg(\prod_{j=1}^N h\big(e_n(v_j)\big) \bigg) \geq 1 \bigg\} \leq \big(n-\deg(h)\big)^N.
$$
Combining the above estimate with inequality~\eqref{key}, the second claim of the theorem follows.
\end{proof}

Next, we provide many examples to illustrate how to apply Theorem~\ref{mainthm2} in practice. For each example below, we will compare the upper bounds obtained from Theorem~\ref{mainthm2} and Theorem~\ref{mainthm4}. These examples also provide some useful insights for proving Theorem~\ref{mainthm3}.

\begin{ex}\rm \label{examples}
In each of the following examples, we will carefully choose a polynomial $h(t)$ such that $h(t)$ satisfies the assumptions in the statement of Theorem~\ref{mainthm2} and $h(t) \mid (t^n-1)$. We often take $J$ to be the set of all possible exponents $k$ such that the coefficient of $t^k$ in $h(t)$ is nonzero, in which case we use the notation $J=\operatorname{supp}(h)$.
\begin{enumerate}[(1)]
    \item If $G$ is an abelian group and $J=\{0,a\}$ with $a\neq -a$, then we can identify the subgroup $H$ generated by $a$ with $\{0,1,\ldots, n-1\}$, where $n$ is the order of $a$. In this case, we can take $h(t)=1-t$. As a consequence, this extends the upper bound in Theorem~\ref{thmA} to any finite abelian group $G$.
    \item Let $G=\Z_n$ and let $k$ be a divisor of $n$ such that $1<k<n/2$. If $J=\{0,1,\ldots, k-1\}$, then we can take $h(t)=1+t+\cdots+t^{k-1}$; if  $J=\{0,1,\ldots,k\}$, then we can take $h(t)=1-t^k$. Note that in the latter case, Theorem~\ref{mainthm2} implies that $D_{G}(J,N)\leq (n-k)^N$, however Theorem~\ref{mainthm4} implies a stronger upper bound $D_{G}(J,N)\leq D_{G}(\{0,k\},N)\leq (\frac{1}{2}+o(1))(n-k)^N$.
    \item Let $G=\Z_{105}$ and $J=\{0,1,2,5,6,7,8,9,12,13,14,15,16,17,20,22,24,26,28,\\31,32,33,34,35,36,39,40,41,42,43,46,47,48\}$. Then we can take
    \begin{align*}
h(t)=\Phi_{105}(t)&=t^{48}+t^{47}+t^{46}-t^{43}-t^{42}-2t^{41}-t^{40}-t^{39}+t^{36}+t^{35}+t^{34}+t^{33}\\
&+t^{32}+t^{31}-t^{28}-t^{26}-t^{24}-t^{22}-t^{20}+t^{17}+t^{16}+t^{15}+t^{14}+t^{13}\\
&+t^{12}-t^{9}-t^{8}-2t^{7}-t^{6}-t^{5}+t^{2}+t+1.
\end{align*}
Theorem~\ref{mainthm2} implies that
$
D_{\Z_{105}}(J,N) \leq (105-48)^N.
$
Note that $|J|=33$, so this new upper bound improves the bound stated in Theorem~\ref{thmG} exponentially. Also note that Theorem~\ref{mainthm4} implies a weaker upper bound $D_{G}(J,N)\leq D_{G}(\{0,35\},N)\leq (\frac{1}{2}+o(1))(105-35)^N$.
\item Recently, Al-Kateeb, Ambrosino, Hong, and Lee \cite{AAHL21} proved that the maximum gap between two consecutive exponents in $\Phi_{mp}(t)$ is $\phi(m)$ for every prime number $p$ and every square-free odd positive integer $m$ less than $p$. Additionally, assume that $\phi(m)<m/2$. Let $n=pm$, $h(t)=\Phi_n(t)$ and $J=\operatorname{supp}(h)$. Then $J \cap (-J)=\{0\}$ since $\deg(h)=\phi(n)<n/2$. This allows us to find $J$ such that $|J|\leq \deg(h)-\phi(m)+1$, and consequently improve the upper bound on $D_{\Z_n}(J,N)$.

\item
Let $p<q$ be primes. Let $\bar{p}$ be the inverse of $p$ modulo $q$, and $\bar{q}$ be the inverse of $q$ modulo $p$, such that $1\leq \bar{p}<q$ and $1 \leq \bar{q}<p$. Lam and Leung \cite{LL96} showed that
$$
\Phi_{pq}(t)=\sum_{j=0}^{(p-1)(q-1)} a_jt^j,
$$
where
$$
a_j=
\begin{cases}
1 & \text{if } j=px+qy \text{ with } 0\leq x<\bar{p}, 0 \leq y<\bar{q};\\
-1 &\text{if } j=px+qy-pq \text{ with } \bar{p}\leq x<q, \bar{q} \leq y<p;\\
0  & \text{otherwise}.
\end{cases}
$$
Let $\theta_{pq}$ be the number of nonzero coefficients in $\Phi_{pq}(t)$; then $\theta_{pq}=2\bar{p}\bar{q}-1$.
Let $\epsilon>0$. Bzd\k{e}ga \cite{B12} proved that there are infinitely many pairs $(p,q)$ such
that $\theta_{pq}<(pq)^{1/2+\epsilon}$. From Lemma~\ref{basic}, it is clear that only finitely many such pairs such that $p=2$. Let $(p,q)$ be such a pair with $p>2$, and let $n=2pq$. Let $h(t)=\Phi_n(t)$; then $h(t)=\Phi_{pq}(-t)$ by Lemma~\ref{basic} and $\deg(h)=\phi(n)=(p-1)(q-1) \in (n/4, n/2)$. Let $J=\operatorname{supp}(h)$; then $J \cap (-J)=\{0\}$, and $|J|=\theta_{pq}<n^{1/2+\epsilon}$. Theorem~\ref{mainthm2} implies that $D_{Z_n}(J,N)\leq (n-\deg(h))^N$, which improves Theorem~\ref{thmG} exponentially since $\deg(h)/|J| \gg n^{1/2-\epsilon}$. However, this is weaker compared to the upper bound required in Theorem~\ref{mainthm3}. Also note that Theorem~\ref{mainthm4} implies a weaker upper bound $D_{Z_n}(J,N)\leq D_{Z_n}(\{0,q\},N)\leq (\frac{1}{2}+o(1))(n-q)^N$.

\item Let $G=\Z_{n}$ and $J=\{0,1,\ldots, p-1,q,q+1,\ldots, q+p-1\}$, where $n=pq$ for prime numbers $5 \leq p<q$. Then we have $|J|=2p$ and $J \cap (-J)=\{0\}$. We can choose $h(t)$ to be the $n$-th inverse cyclotomic polynomial \cite[Section 11.1]{San22} (up to multiplying $-1$), that is,
$$
h(t)=-\frac{t^n-1}{\Phi_n(t)}=-t^{q+p-1}-t^{q+p-2}-\cdots-t^{q}+t^{p-1}+t^{p-2}+\cdots+1.
$$
Note that $\deg (h)=q+p-1>|J|$. Thus, Theorem~\ref{mainthm2} implies that $D_{Z_n}(J,N)\leq (n-q-p+1)^N$ By taking $q/p \to \infty$, this essentially implies Theorem~\ref{mainthm3} when $M=2$ (more precisely, choosing $p \approx n^{\epsilon/3}$ and $q \approx n^{1-\epsilon/3}$) except that the last condition in the statement of Theorem~\ref{mainthm3} fails. Also note that Theorem~\ref{mainthm4} implies a weaker upper bound $D_{G}(J,N)\leq D_{G}(\{0,q\},N)\leq (\frac{1}{2}+o(1))(n-q)^N$.
\end{enumerate}
\end{ex}

\medskip

Now we are ready to present the proof of
Theorem~\ref{mainthm3}.

\begin{proof} [Proof of Theorem~\ref{mainthm3}]
We may assume that $M \geq 2$. Since $\lim_{p \to \infty} (1-\frac{1}{p})=1$, we can find some integer $m \geq M$ and distinct primes $p_1<p_2<\cdots<p_m$ such that $p_1>M$ and
\begin{equation}\label{nmid}
\frac{1}{2}< \prod_{j=1}^m \bigg (1-\frac{1}{p_j}\bigg)<1.
\end{equation}
Let $r=p_1p_2\ldots p_m$. We can find a prime $Q \in (r,2r)$ by Chebyshev's theorem. Let $n=Qr^s$, where $s \geq 3$ is a positive integer such that
$$
\epsilon\leq \frac{3}{s+1}<1, \quad \text{and} \quad 4n^{\epsilon}>n^{3/(s+1)}.
$$
Let $h(t)=\Phi_{Q}(t)\Phi_{r^s}(t)$. Note that $\gcd(Q,r^s)=1$ implies that $\Phi_Q(t)$ and $\Phi_{r^s}(t)$ are coprime. Since $n=Qr^s$, it follows that $h(t) \mid (t^n-1)$.  By Lemma~\ref{basic},
$$
h(t)=\Phi_{Q}(t)\Phi_{r^s}(t)=(1+t+\cdots+t^{Q-2}+t^{Q-1})\Phi_{r}(t^{r^{s-1}}).
$$
Let $J$ be the set of exponents $k$ such that the coefficient of $t^k$ in $h(t)$ is nonzero; we identify $J$ as a subset of $\Z_n$. Note that $d:=\deg(h)=Q-1+\deg \Phi_{r^s}=Q-1+\phi(r^s)<n/2$, so $J \cap (-J)=\{0\}$. Also, we have $0,1 \in J$ and $J$ generates $\Z_n$. Since $h(t)\mid (t^n-1)$, Theorem~\ref{mainthm2} immediately implies that
$$
D_{\Z_n}(J,N) \leq (n-d)^N
$$
holds for all positive integers $N$.

Inequality~\eqref{nmid} implies that
\begin{equation}\label{d}
d=Q-1+\phi(r^s)>\phi(r^s)>\frac{r^s}{2}>\frac{1}{2} (n/2)^{s/(s+1)}>\frac{1}{8}n^{1-\epsilon/3}.
\end{equation}
Note that $\Phi_{r}(t^{r^{s-1}})$ has at most $\phi(r)$ nonzero coefficients and there is a gap with size at least $r^{s-1}>Q$ between any two consecutive exponents. Thus, by our constructions of $h$ and $J$, we have $$
Q\phi(r) \geq |J| \geq Q>n^{1/(s+1)}\geq n^{\epsilon/3},
$$
and
$$
\frac{d}{|J|}\geq \frac{\phi(r^s)+Q-1}{Q\phi(r)}>\frac{\phi(r^s)}{Q\phi(r)}=\frac{1}{Q} r^{s-1}\geq\frac{1}{2} r^{s-2}\geq \frac{1}{2}(n/2)^{(s-2)/(s+1)}\geq \frac{1}{4}n^{(s-2)/(s+1)} \geq \frac{1}{16}n^{1-\epsilon}.
$$

Finally, we need to show the above upper bound $(n-d)^N$ cannot be deduced directly from Theorem~\ref{mainthm4}. Let $a \in J\setminus \{0\}$ and let $H$ be the subgroup of $\Z_n$ generated by $a$. Then it is clear that $[G:H]=\gcd(a,n)=\gcd(a,Q)\gcd(a,r^s)$. By our constructions of $h$ and $J$, $a$ can be written as $x+\ell r^{s-1}$, where $0 \leq x \leq Q-1$ and $0 \leq \ell \leq \phi(r)<r<Q$. Note that if $\ell \neq 0$, then $\gcd(\ell,r) \leq r/p_1$. Next we show that $\gcd(a,Q) \leq r^{s}/p_1$ case by case:
\begin{itemize}
    \item If $x=0$, then $\gcd(a,n)=\gcd(\ell r^{s-1}, r^s)=\gcd(\ell,r)r^{s-1}\leq r^{s}/p_1$.
    \item If $\ell=0$, then $\gcd(a,n)=\gcd(x,n)\leq x<Q<r^{s}/p_1$.
    \item If $Q \mid a$, then $\gcd(a,r)=1$ and thus $\gcd(a,n)=Q<2r< r^{s}/p_1$.
    \item Assume that $Q \nmid a$, $\ell \neq 0$, $x \neq 0$ and $r \nmid x$. Then there is $1\leq j \leq m$, such that $p_j \nmid x$. It follows that $\gcd(a,n)=\gcd(x+\ell r^{s-1},r^s) \leq r^s/p_j^s<r^{s}/p_1$.
    \item Assume that $Q \nmid a$, $\ell \neq 0$, $x \neq 0$ and $r \mid x$. Since $0<x<Q<2r$, we must have $x=r$. Thus,  $\gcd(a,n)=\gcd(a,r^s)=\gcd(r+\ell r^{s-1},r^s)=r<r^{s}/p_1$.
\end{itemize}
Therefore, inequality~\eqref{nmid} and inequality~\eqref{d} imply that
$$
d-[G:H]=Q-1+\phi(r^s)-\gcd(a,n)\geq Q-1+\frac{r^s}{2}-\frac{r^s}{p_1}>\frac{r^s}{2}-\frac{r^s}{M+1} \geq \frac{r^s}{6}>\frac{1}{24}n^{1-\epsilon/3},
$$
as required.
\end{proof}

\begin{rem}\label{remo}
One can easily modify the construction in the above proof to show that for any function $f$ such that $f(n)=o(n)$ as $n \to \infty$, then there are infinitely many $n$ together with a subset $J \subset \Z_n$  such that $J \cap (-J)=\{0\}$, $J$ generates $\Z_n$, and $D_{\Z_n}(J,N)\leq (n-f(n)|J|)^N$ holds for all positive integers $N$. For example, if $f(n)=\frac{n}{\log n}$, then one can still use the same construction apart from setting $n=Qr^{10r}$.
\end{rem}

\section{Proof of Theorem~\ref{mainthm4}}\label{thm4}
Without loss of generality, we can assume that $H=\Z_n$ (where $n \geq 3$) and $a=1$. We identify $\Z_n$ with $\{0,1,\ldots, n-1\}$.

To prove the lower bound for $D_G(\{0,a\},N)$, note that it suffices to prove the following in view of Proposition~\ref{lb}:
\begin{equation}\label{lb01}
D_{\Z_n}(\{0,1\},N) \gg \frac{(n-1)^N}{n\sqrt{N}}.
\end{equation}
To prove inequality~\eqref{lb01}, we can mimic the construction of Alon \cite[Theorem 5]{Le14} as follows. Let $B \subset \Z_n^{N}$ be the subset consisting of all vectors $(a_{1},a_{2},\ldots,a_{N})$ such that
$$
\quad 0\leq a_{i}\leq n-2, \text { and} \quad \sum_{i=1}^{N}a_{i}=\bigg \lfloor \frac{(n-2)N}{2} \bigg\rfloor
$$
when we identify $a_1,a_2,\ldots,a_N$ as integers. It is easy to see that $(B-B)\cap \{0,1\}^{N}=\{\mathbf{0}\}$. Thus, Corollary~\ref{central} implies that
\begin{equation*}
    D_{\Z_n}(\{0,1\},N) \geq |B| \gg  \frac{(n-1)^{N}}{n\sqrt{N}}.
\end{equation*}

To prove the upper bound, by taking $h(t)=1-t$, Theorem~\ref{mainthm2} implies that
$$
D_{G}(\{0,1\},N) \leq [G:H]^N \cdot \#\bigg\{\vec{v}=(v_1,v_2,\ldots, v_N) \in \Z_n^N: \Re \bigg(\prod_{j=1}^N h\big(e_n(v_j)\big) \bigg) \geq 1 \bigg\} .
$$
Note that if there is some $1 \leq j \leq N$ such that $v_j=0$, then $h(e_n(v_j))=h(1)=0$. Thus, it suffices to show
\begin{equation}\label{eqq}
\#\bigg\{\vec{v}=(v_1,v_2,\ldots, v_N) \in \{1,2, \ldots, n-1\}^N: \Re \bigg(\prod_{j=1}^N h\big(e_n(v_j)\big) \bigg) \geq 1 \bigg\}\leq \big(c+o(1)\big) \big(n-1\big)^N.
\end{equation}
Next, we try to compute the real part on the left-hand side of the above inequality explicitly:
\begin{align*}
\prod_{j=1}^N h\big(e_n(v_j)\big)
&=\prod_{j=1}^N (1-e_n(v_j))\\
&=\prod_{j=1}^N (1-\cos (2\pi v_j/n)-i\sin (2\pi v_j/n))\\
	&=\prod_{j=1}^N (2 \sin^2 (\pi v_j/n)-2i\sin (\pi v_j/n)\cos (\pi v_j/n))\\
	&=\prod_{j=1}^N 2\sin(\pi v_j/n)\cdot \prod_{j=1}^N \exp\big(i (\pi v_j/n-\pi/2)\big)\\
	&=\left(\prod_{j=1}^N 2\sin(\pi v_j/n)\right)\cdot \exp\bigg(i\pi \sum_{j=1}^N v_j/n\ -i\pi N/2\bigg).
\end{align*}
In particular, 
$$
\Re \bigg(\prod_{j=1}^N h\big(e_n(v_j)\big) \bigg)=\left(\prod_{j=1}^N 2\sin(\pi v_j/n)\right)\cdot \cos\bigg(\pi \sum_{j=1}^N v_j/n\ -\pi N/2\bigg).
$$
Thus, if $1\leq \Re \big(\prod_{j=1}^N h\big(e_n(v_j)\big) \big)$, then we must have
$$
\cos\bigg(\pi \sum_{j=1}^N v_j/n\ -\pi N/2\bigg)> 0.
$$

Let $X$ be the random variable such that
$$
\Pr(X=j)=\frac{1}{n-1}, \quad \forall j \in \{1, \ldots, n-1\}.
$$
It is clear that $\mu=\mathbb{E}(X)=\frac{n}{2}$.
$$
\mathbb{E}(X^2)=\frac{1}{n-1} \sum_{i=1}^{n-1} i^2=\frac{1}{n-1} \frac{(n-1)n (2n-1)}{6}=\frac{n(2n-1)}{6}.
$$
It follows that
$$
\sigma^2=\operatorname{Var}(X)=\mathbb{E}(X^2)-\big(\mathbb{E}(X)\big)^2=\frac{n(2n-1)}{6}-\frac{n^2}{4}=\frac{n(n-2)}{12}>\frac{(n-1)^2}{24},
$$
where we used the assumption that $n\geq 3$.
Let $X_1, X_2, \ldots, X_N$ be i.i.d. copies of $X$ and let $S_N=X_1+X_2+\ldots+X_N$.
Based on the above analysis, to prove inequality~\eqref{eqq}, it suffices to show that
\begin{equation}\label{wts}
\Pr\bigg(\cos \left(\pi S_N/n -\pi N/2\right)\leq 0\bigg)\geq 1-c+o(1).
\end{equation}
Let $U$ be the event that
$\cos \left(\pi S_N/n -\pi N/2\right)\leq 0$. Note that $U$ is equivalent to each of the following events:
\begin{itemize}
    \item $
 S_N/n- N/2 \in \big[\frac{1}{2}+2k, \frac{3}{2}+2k\big]
$
for some integer $k$.
\item $
S_N \in \big[\frac{nN}{2}+\frac{n}{2}+2kn, \frac{nN}{2}+\frac{3n}{2}+2kn\big]
$
for some integer $k$.
\item If $N$ is even and $n$ is odd, $$S_N \equiv \frac{nN}{2}+\frac{n+1}{2}, \frac{nN}{2}+\frac{n+3}{2}, \ldots, \frac{nN}{2}+\frac{3n-1}{2}\pmod{2n};$$  otherwise $$S_N \equiv \frac{nN}{2}+\frac{n}{2}, \frac{nN}{2}+\frac{n}{2}+1, \ldots, \frac{nN}{2}+\frac{3n}{2}\pmod{2n}.$$
\end{itemize}

Note that in the first case, $S_N$ occupies $n$ residue classes modulo $2n$; and in the second case, $S_N$ occupies $n+1$ residue classes modulo $2n$. It suffices to show that $S_N$ is roughly uniformly distributed among all residue classes modulo $2n$. To achieve that, we use Theorem~\ref{local}. Next, we assume that we are in the first case, i.e., $N$ is even and $n$ is odd; for the other case, the proof is similar, and we omit the details of the proof.

Let $C$ be an arbitrary positive constant. Since $N\mu=nN/2$, Theorem~\ref{local} implies that
if $k \geq 0$, and $j \in \{\frac{n+1}{2}, \frac{n+3}{2}, \ldots, \frac{3n-1}{2}\}$, then
$$
\Pr\bigg(S_N=\frac{nN}{2}+j+2kn\bigg)\geq\Pr\bigg(S_N=\frac{nN}{2}+j+n+2kn\bigg)+ o\bigg(\frac{1}{\sqrt{N}}\bigg)
$$
holds uniformly in $j$ and $k$, since $e^{-x}$ is decreasing when $x>0$.
It follows that
\begin{align*}
&\Pr \bigg(S_N \in \bigg[\frac{nN}{2}+\frac{n}{2}+2kn, \frac{nN}{2}+\frac{3n}{2}+2kn\bigg]\bigg)\\
&\geq \Pr \bigg(S_N \in \bigg(\frac{nN}{2}+\frac{3n}{2}+2kn, \frac{nN}{2}+\frac{n}{2}+2(k+1)n\bigg)\bigg)+o\bigg(\frac{1}{\sqrt{N}}\bigg),
\end{align*}
where the error term is uniform in $k \geq 0$.
Thus, if we sum the above inequality over $0 \leq k \leq \lceil C\sqrt{N} \rceil$, then we obtain that
\begin{align*}
&\Pr\bigg(S_N-\frac{nN}{2} \equiv \frac{n+1}{2}, \frac{n+3}{2}, \ldots, \frac{3n-1}{2} \pmod {2n}, \quad \frac{nN}{2}\leq S_N\leq \frac{nN}{2}+2\lceil C\sqrt{N} \rceil n\bigg)\\
&-\Pr\bigg(S_N-\frac{nN}{2} \not \equiv \frac{n+1}{2}, \frac{n+3}{2}, \ldots, \frac{3n-1}{2} \pmod {2n}, \quad \frac{nN}{2}\leq S_N\leq \frac{nN}{2}+2\lceil C\sqrt{N} \rceil n\bigg)\\
&\geq -\Pr\bigg(\frac{nN}{2}\leq S_N\leq \frac{nN}{2}+\frac{n-1}{2}\bigg)+o(1).
\end{align*}
Since $S_N-nN/2$ is symmetric and the cosine function is an even function, we have shown that
\begin{equation}\label{goal}
 \Pr(U)\geq \frac{1}{2}-\Pr\bigg(\frac{nN}{2}\leq S_N\leq \frac{nN}{2}+\frac{n-1}{2}\bigg)-\Pr\bigg(S_N\geq \frac{nN}{2}+2\lceil C\sqrt{N} \rceil n\bigg)+o(1).
\end{equation}
By Theorem~\ref{BE}, as $N \to \infty$, we have
\begin{align*}
&\Pr\bigg(\frac{nN}{2}\leq S_N\leq \frac{nN}{2}+\frac{n-1}{2}\bigg)
=\Pr\bigg(0 \leq \frac{S_N-nN/2}{\sigma \sqrt{N}} \leq \frac{n-1}{2\sigma\sqrt{N}}\bigg)\\
&\leq \Pr\bigg(0 \leq \frac{S_N-nN/2}{\sigma \sqrt{N}} \leq \sqrt{\frac{6}{N}}\bigg)=\Phi\bigg(\sqrt{\frac{6}{N}}\bigg)-\frac{1}{2}+O\bigg(\frac{1}{\sqrt{N}}\bigg)=o(1).
\end{align*}
Note that $\sigma^2=\frac{n(n-2)}{12}\geq \frac{n^2}{36}$ since $n\geq 3$. Similarly, by Theorem~\ref{BE}, as $N \to \infty$, we have
\begin{align*}
&\Pr\bigg(S_N\geq \frac{nN}{2}+2\lceil C\sqrt{N} \rceil n\bigg)
\leq \Pr\bigg(\frac{S_N-nN/2}{\sigma \sqrt{N}} \geq \frac{2C\sqrt{N}n}{\sigma\sqrt{N}}\bigg)\\
&\leq \Pr\bigg(\frac{S_N-nN/2}{\sigma \sqrt{N}} \geq 12C\bigg)=1-\Phi(12C)+O\bigg(\frac{1}{\sqrt{N}}\bigg)=1-\Phi(12C)+o(1).
\end{align*}

By putting the above two estimates into inequality~\eqref{goal}, we conclude that as $N \to \infty$,
$$
\Pr(U)=\Pr\bigg(\cos \left(\pi N/2-\pi S_N/n \right)\leq 0\bigg)\geq \Phi(12C)-\frac{1}{2}+o(1).
$$
Note that $C$ can be chosen arbitrarily, thus, by taking $C \to \infty$,
$$
\Pr\bigg(\cos \left(\pi N/2-\pi S_N/n \right)\leq 0\bigg)\geq \frac{1}{2}+o(1),
$$
establishing the required estimate \eqref{wts}.

\section{Concluding remarks}\label{conclusion}
In this paper, we mainly focus on the extremal problems about intersective sets over finite abelian groups via several tools from algebra, number theory, and probability. We conclude our new results and discuss our methods and further problems as follows.
    \begin{itemize}
        \item We build the bridge between the extremal function $D_{G}(J,N)$ and cyclotomic polynomials. More precisely, we characterize the conditions of groups $G$ and subset $J$ for which we can significantly improve the upper bounds from linear algebra methods. By explicitly constructing polynomials in certain cases, we can improve the general upper bounds of Alon \cite{Le14} and Heged\H{u}s \cite{HG20} exponentially. Finding more such examples will be interesting.
        \item Via some tools and properties in abstract algebra, we successfully generalize the previous results of Alon \cite{Le14} and Heged\H{u}s \cite{HG20} from finite fields to general abelian groups. Furthermore, we carefully compute the spectrum of certain matrices and obtain some improved upper bounds. In particular, when $J=\{0,1\}$ and $G=\mathbb{F}_{p}$, we show a new upper bound $D_{\mathbb{F}_{p}}(J,N)\le (\frac{1}{2}+o(1))(p-1)^{N}$, which improves on a theorem of Huang, Klurman, and Pohoata \cite{HKP20}. The main tools of this part are basic representation theory and quantitative versions of the central limit theorem.
        \item  In Example~\ref{examples}, we present improved upper bounds for $D_{G}(J,N)$ with many families of groups $G$ and subsets $J$. We wonder whether some of these upper bounds are close to tight. On the other hand, it seems difficult to come up with new constructions that can provide better lower bounds for $D_G(J,N)$ in general. Currently we can show a simple recursive lower bound $D_G(J,N)\geq D_G(J,1)^N$, since if $B \subset G$ such that $(B-B) \cap J=\{0\}$, then $A=B^N$ satisfies $(A-A) \cap J=\{\mathbf{0}\}$. However, we expect that this lower bound is far from optimal in general.
\end{itemize}

\section*{Acknowledgment}
The authors thank Th\'{a}i Ho\`ang L\^{e} for pointing out that Alon's result can be extended to cyclic groups. The authors also thank the anonymous referees for their valuable comments and suggestions. The second author is grateful to Gabriel Currier, Greg Martin, and J\'ozsef Solymosi for many helpful discussions. The research of the first author is supported by the Institute for Basic Science (IBS-R029-C4).

\end{document}